\newtheorem{theorem}{Theorem}[section]
\newtheorem{lemma}[theorem]{Lemma}
\newtheorem{conjecture}[theorem]{Conjecture}
\theoremstyle{definition}
\theoremstyle{remark}
\newtheorem{remark}[theorem]{Remark}
\newtheorem{corollary}[theorem]{Corollary}
\numberwithin{equation}{section}
\begin{document}

\title[Restricted averaging operators]{Restricted averaging operators to cones over finite fields}

\author{Doowon Koh, Chun-Yen Shen, and Seongjun Yeom}
\address{Department of Mathematics\\
Chungbuk National University \\
Cheongju Chungbuk 28644, Korea}
\email{koh131@chungbuk.ac.kr}

\address{Department of Mathematics\\
National Central University \\
Chungli,  32054 Taiwan}
\email{chunyshen@gmail.com}

\address{Department of Mathematics\\
Chungbuk National University \\
Cheongju Chungbuk 28644, Korea}
\email{mathsj@chungbuk.ac.kr}

\thanks{The first author was supported by  the research grant of   Basic Science Research Program through the National Research Foundation of Korea  funded by the Ministry of Education, Science and Technology (NRF-2015R1A1A1A05001374) and the second author was supported by the NSC, through grant NSC102-2115-M-008-015-MY2.}


\thanks{}

\subjclass[2000]{Primary: 42B05 ; Secondary 11T23 }



\keywords{Cone, finite fields, restricted averaging operators}

\begin{abstract} We investigate  the sharp $L^p\to L^r$ estimates for the restricted averaging operator $A_C$ over the cone $C$ of the $d$-dimensional vector space $\mathbb F_q^d$ over the finite field $\mathbb F_q$ with $q$ elements. The restricted averaging operator $A_C$ for the cone $C$ is defined by the relation that $A_Cf=f\ast \sigma |_C$, where $\sigma$ denotes the normalized surface measure on the cone $C$, and $f$ is a complex valued function on the space $\mathbb F_q^d$ with the normalized counting measure $dx$. In the previous work \cite{KY15}, the sharp boundedness of $A_C$ was obtained in odd dimensions $d\ge 3$ but   partial results were only given in even dimensions $d\ge 4.$ In this paper we prove the optimal estimates in even dimensions $d\ge 6$ in the case when the cone $C\subset \mathbb F_q^d$ contains a $d/2$ dimensional subspace.

\end{abstract}

\maketitle

\section{Introduction}

Let $T$ be an operator on the class of Schwarz functions $f: \mathbb R^d \to \mathbb C.$
The main question in harmonic analysis is to determine the exponents $1\le p, r\le \infty$ such that the following inequality holds:
\begin{equation}\label{eq1} \|Tf\|_{L^r(\mathbb R^d)} \le C \|f\|_{L^p(\mathbb R^d)}\end{equation}
where the constant $C>0$ is independent of the Schwarz functions $f.$
For example, when  $Tf$ is the Fourier transform of $f$, the Hausdorff-Young inequality states that the inequality \eqref{eq1} holds for  $1\le p\le 2$ and $ 1/p+1/r=1.$\\

 Another interesting question is to decide whether $Tf$ can be meaningfully restricted to a surface $V\subset \mathbb R^d$ or not.
More precisely, we are interested in finding exponents $1\le p,r\le \infty$ such that the following restriction estimate holds:
$$\|Tf\|_{L^r(V, d\nu)} \le C \|f\|_{L^p(\mathbb R^d)}$$
where $d\nu$ denotes a surface measure on $V \subset \mathbb R^d.$
Clearly, the answer to this question relies on the surface $V$ and the operator $T.$
To indicate that $Tf$ is a function restricted to the surface $V,$ we write $T_Vf$ for $Tf.$
This problem is referred to as the restriction problem for the surface $V.$
When  $T_Vf=\widehat{f},$ it is well known as the Fourier restriction problem for $V$ which was initially introduced by E.M. Stein in 1967.
In particular, many researchers have  made  much effort on solving the conjecture on the Fourier restriction problem for the sphere, the paraboloid, and the cone.
The complete answers are known for the parabola and the circle in two dimensions, and for the cones in three and four dimensions(see \cite{Zy74, Ba85, Wo01}).
However, the conjecture is still open in higher dimensions and some new ideas are needed to completely understand the Fourier restriction phenomena.
For the background and  recent progress on the Fourier restriction problem, we  refer readers to \cite{Ta04, Wo03, Bo91, Fe70,  To75,   B91, Ta03,  Gu15, Gu16}.\\

It has been believed that new approaches are needed to obtain further improvement on the Fourier restriction problem.
It will be helpful to see the matter in a different point of view.
Based on this mind, Mockenhaupt and Tao \cite{MT04} initially studied the Fourier restriction problem in the finite field setting.
Their work has been improved by other researchers (see \cite{IK10, Le13, Le14, LL10}). Other interesting problems in harmonic analysis have been formulated and studied in the finite field setting (for example, see \cite{Ca06, CSW, EOT}).\\

It is also important to grasp the fundamental phenomena which appear in restricting  operators to an appropriate surface.
One may study the restriction problem related to certain operators which are different from the Fourier transformation.
The authors in \cite{GGIPS13} provided some size information about  convolution functions restricted to any affine subspace in $\mathbb R^d.$
In the finite field setting, the author in \cite{Ko15} initially investigated and obtained the sharp $L^p\to L^r$ mapping properties for the restricted averaging operator  to any algebraic curve in two dimensional vector spaces over finite fields. This result was deduced by applying  the sharp Fourier restriction theorem on curves in two dimensions.
This work was extended to higher dimensional algebraic varieties such as  the paraboloid, the sphere, and the cone.
Indeed, using more delicate Fourier decay estimate, the authors in \cite{KY15}  established the optimal $L^p\to L^r$  estimates of the restricted averaging operator over regular varieties such as the paraboloid and the sphere in all dimensions, and the  cone in odd dimensions. In addition, they obtained certain weak-type estimates for the cone in even dimensions. In this paper,  we shall establish the sharp strong-type estimates for the cone in even dimensions $d\ge 6$ in the specific case when the cone $C$ contains a $d/2$-dimensional subspace.\\

\subsection{Review of the discrete Fourier analysis}
 After reviewing the definition of  the restricted averaging problem for the cone in the  finite field setting,  our main result will be clearly stated.
Before we proceed with this, let us introduce some notation and basic concepts of the discrete Fourier analysis.
Let $\mathbb F_q^d$ be the $d$-dimensional vector space over a finite filed $\mathbb F_q$ with $q$ elements.
We shall always assume that $q$ is a power of odd prime.
We endow the space $\mathbb F_q^d$ with the normalized counting measure $dx.$
We write $(\mathbb F_q^d, dx)$ for the vector space $\mathbb F_q^d$ with the normalized counting measure.
On the other hand, the dual space of $(\mathbb F_q^d, dx)$ will be denoted by $(\mathbb F_q^d, dm)$ which is equipped with the counting measure $dm.$
Thus, if  $ f: (\mathbb F_q^d, dx) \to \mathbb C$, and $g:(\mathbb F_q^d, dm) \to \mathbb C,$ then the notation of norms is  used as follows: for $1\le p<\infty,$
$$ \|f\|_{L^p(\mathbb F_q^d, dx)}= \left( \frac{1}{q^d} \sum_{x\in \mathbb F_q^d} |f(x)|^p\right)^{1/p} ~~\mbox{and} \quad \|g\|_{L^p(\mathbb F_q^d, dm)}= \left( \sum_{m\in \mathbb F_q^d} |f(m)|^p\right)^{1/p}. $$
Also recall that $ \|f\|_{L^\infty(\mathbb F_q^d, dx)} =\max\limits_{x\in \mathbb F_q^d} |f(x)|$ and $ \|g\|_{L^\infty(\mathbb F_q^d, dm)} =\max\limits_{m\in \mathbb F_q^d} |g(m)|.$
The cone $C\subset (\mathbb F_q^d, dx), d\ge 3,$ is defined by the set
\begin{equation}\label{defcone} C=\{x\in \mathbb F_q^d: x_1^2+x_2^2+ \cdots+ x_{d-2}^2 =x_{d-1} x_d\}.\end{equation}
Mockenhaupt and Tao \cite{MT04} gave the complete answer to the restriction problem for the cone $C$ in three dimensions.
We endow the cone $C$ with the normalized surface measure $\sigma$ which is defined by the relation
$$ \int_C f(x) ~d\sigma(x) :=\frac{1}{|C|} \sum_{x\in C} f(x),$$
where $|C|$ denotes the cardinality of the cone $C\subset \mathbb F_q^d,$ and $f:(\mathbb F_q^d, dx)\to \mathbb C.$ In other words, the mass of each point of the cone $C$ is considered as $1/|C|.$
\begin{remark}\label{remark1}Since $d\sigma(x) =\frac{q^d}{|C|} C(x)~dx,$  the normalized surface measure $\sigma$ on the cone $C$ can be identified with a function $ \frac{q^d}{|C|}  C(x)$ on $(\mathbb F_q^d, dx)$,
where we write $C(x)$ for the characteristic function $\chi_C$ on the cone $C.$
Namely, we shall identify a set $E$ with the characteristic function $\chi_E, $ which allows us to use simple notation. \end{remark}

Let  $g$ be a complex-valued function on $(\mathbb F_q^d, dm).$ Then  $\widehat{g}$, the Fourier transform of $g$ is defined on the dual space $(\mathbb F_q^d, dx)$ as follows:
$$ \widehat{g}(x)= \int_{\mathbb F_q^d}  \chi(-m\cdot x) \,g(m)~ dm=\sum_{m\in \mathbb F_q^d} \chi(-m\cdot x)\, g(m),$$
where $\chi$ denotes a  nontrivial additive character of $\mathbb F_q$ and  $m\cdot x$ is the usual dot-product notation.
Given a function $f:(\mathbb F_q^d, dx) \to \mathbb C$,  the inverse Fourier transform of $f$, denoted by $f^\vee$, is defined on $(\mathbb F_q^d, dm)$ and it takes the following form
$$ f^\vee(m) =\int_{\mathbb F_q^d} \chi(m\cdot x) \,f(x)~dx = \frac{1}{q^d} \sum_{x\in \mathbb F_q^d} \chi(m\cdot x) \,f(x).$$
We stress that the choice of $\chi$ does not change our results in this paper as long as $\chi$ is a nontrivial additive character of $\mathbb F_q.$
Recall that the orthogonality relation of $\chi$ states that
$$\sum_{x\in \mathbb F_q^d} \chi( m\cdot x) = \left\{ \begin{array}{ll} 0  \quad&\mbox{if}~~ m\neq (0,\dots,0)\\
q^d  \quad &\mbox{if} ~~m= (0,\dots,0), \end{array}\right.$$
Observe that the Plancherel theorem states
$|| f^\vee\|_{L^2(\mathbb F_q^d, dm)} = \|f\|_{L^2(\mathbb F_q^d, dx)},$ which yields
$$ \sum_{m\in \mathbb F_q^d} |f^\vee(m)|^2 = \frac{1}{q^d} \sum_{x\in \mathbb F_q^d} |f(x)|^2.$$

\subsection{ Restricted averaging problem and statement of main result}
Given two functions $f, h: (\mathbb F_q^d, dx)\to \mathbb C,$  the convolution function $f\ast h$ is defined on $(\mathbb F_q^d, dx)$ as follows:
$$ f\ast h(y)=\int_{\mathbb F_q^d} f(y-x)\, h(x)~dx=\frac{1}{q^d} \sum_{x\in \mathbb F_q^d} f(y-x)\, h(x)\quad \mbox{for}~~y\in (\mathbb F_q^d, dx).$$
It clearly follows that $(f\ast h)^\vee(m)=f^\vee(m) \,h^\vee(m)$ for $m\in (\mathbb F_q^d, dm).$  Replacing the function $h$ by the normalized surface measure $\mu$ on an algebraic variety $V\subset (\mathbb F_q^d, dx),$
the averaging operator $A$ can be defined by
$$ Af(y)=f\ast \mu(y)=\int_V f(y-x) ~d\mu(x) := \frac{1}{|V|} \sum_{x\in V} f(y-x),$$
where both $f$ and $Af$ are defined on $(\mathbb F_q^d, dx).$ In the finite field setting,   Carbery-Stones-Wright \cite{CSW} initially studied the $L^p\to L^r$ estimates for the averaging operator. Much attention has been given to this problem (for example, see \cite{Ko, KS11, KSS}).\\

As a variant of the averaging operator $A$ over $(V, \mu) \subset (\mathbb F_q^d, dx)$,  a restricted averaging operator $A_V$ to $V$ is defined  by restricting $Af=f\ast \mu$ to the variety $V$. Namely, we have
$ A_V f=Af|_V=f\ast d\mu|_V.$
 Then the restricted averaging problem is to determine $1\le p,r \le \infty$ such that the following restricted averaging inequality holds:
\begin{equation}\label{A(pr)} \|A_Vf\|_{L^r(V, \mu)} \le C \|f\|_{L^p(\mathbb F_q^d, dx)},\end{equation}
where the constant $C>0$ is independent of the functions $f$ and the size of the underlying finite field $\mathbb F_q.$
This problem was proposed in \cite{Ko15} where the sharp restricted averaging inequality was established in the case when the variety $V$ is any curve on plane.
Such a sharp result was  obtained by a direct application of  the complete solution to the Fourier restriction problem for curves in two dimensions. The authors in \cite{KY15} observed from the Fourier decay estimate  that the optimal restricted averaging inequalities can be obtained if the variety $V\subset (\mathbb F_q^d, dx)$ satisfies the following two conditions:
\begin{equation}\label{defregular}|V|\sim q^{d-1} \quad \mbox{and}\quad  \max_{m\ne (0,\ldots,0)} | V^{\vee}(m)| \lesssim q^{-(d+1)/2}.\end{equation}
Here, and throughout this paper, we write $ E(x)$ for the characteristic function $\chi_{E}$ on the set $E \subset \mathbb F_q^d.$
Also recall that $X \lesssim Y$ is used to denote that there exists $C>0$ independent of $q=|\mathbb F_q|$ such that $X\le C Y.$ In addition, $X\sim Y$ means $X\lesssim Y \lesssim X.$
We shall write $A_V(p\to r)\lesssim 1$ if the restricted averaging inequality \eqref{A(pr)} holds.\\

A variety $V\subset (\mathbb F_q^d, dx)$ satisfying the conditions \eqref{defregular} is called a regular variety. Typical regular varieties are the paraboloids and the spheres with nonzero radius.
When the restricted operator $A_V$ is related to a non-regular variety $V\subset (\mathbb F_q^d, dx)$,  it may  not be a simple problem to find the sharp restricted averaging inequality, because the optimal results can not be obtained by simply applying the Fourier decay estimate. Therefore, it would be interesting to prove sharp restricted averaging inequalities for non-regular varieties. The cone $C\subset (\mathbb F_q^d, dx)$ defined as in \eqref{defcone} has unusual structures in that it is not a regular variety in even dimensions $d\ge 4$, but it is a regular variety in odd dimensions $d\ge 3$ (see Corollary 4.3 in \cite{KY15}).  For this reason, we are interested in establishing the sharp restricted averaging problem on cones $C\subset (\mathbb F_q^d, dx)$ in even dimensions $d\ge 4.$ The necessary conditions for the boundedness of $A_C(p\to r)$ were given in \cite{KY15}. For example, the lemma below follows immediately from Lemma 2.1 in \cite{KY15}.
\begin{lemma}\label{LemN} Let $\sigma$ denote the normalized surface measure on the cone $C\subset (\mathbb F_q^d, dx).$ Suppose that the restricted averaging estimate
$$ \|A_Cf\|_{L^{r}(C, \sigma)} \lesssim \|f\|_{L^{p}(\mathbb F_q^d, dx)}$$
holds for all function $f$ on $(\mathbb F_q^d, dx).$ Then the following two statements are true:

\begin{enumerate}
\item If   the cone $C$ does not contain any subspace $H$ with $|H|> q^{\frac{d-1}{2}}$, then  $\left(\frac{1}{p},\, \frac{1}{r}\right)$ must lie on the convex hull of points $(0,0), (0,1), \left(\frac{d-1}{d}, \, 1\right)$ and $\,P_0:=\left(\frac{d-1}{d}, \, \frac{1}{d}\right).$
\item  If  the cone $C$ contains a $d/2$-dimensional subspace $H,$ then $\left(\frac{1}{p},\, \frac{1}{r}\right)$ lies on the convex hull of points $(0,0), (0,1),$
$\left(\frac{d-1}{d}, \, 1\right), \,
P_1:=\left(\frac{d-1}{d}, \, \frac{1}{d-2}\right)$ and $\, P_2:=\left(\frac{d^2-3d+2}{d^2-2d+2}, \, \frac{d-2}{d^2-2d+2}\right).$
\end{enumerate}
\end{lemma}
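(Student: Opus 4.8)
The plan is to recover both regions as intersections of half-planes, each half-plane coming from inserting a concrete test function into the hypothesized estimate \eqref{A(pr)} and letting $q\to\infty$. Since the implied constant in \eqref{A(pr)} is independent of $q$, any inequality $q^{\alpha}\lesssim q^{\beta}$ produced by a test function forces the linear constraint $\alpha\le\beta$ on $\left(\frac1p,\frac1r\right)$. Throughout I will use that $C$ is symmetric under $x\mapsto -x$ (the defining form is even), so $-C=C$, and that $|C|\sim q^{d-1}$.

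First I would test $f$ equal to the characteristic function of the single point $\{0\}$. Here $A_Cf(y)=\frac{1}{|C|}\,C(y)$, so $A_Cf\equiv |C|^{-1}$ on $C$ and $\|A_Cf\|_{L^r(C,\sigma)}=|C|^{-1}\sim q^{-(d-1)}$, while $\|f\|_{L^p(\mathbb F_q^d,dx)}=q^{-d/p}$. This forces $q^{-(d-1)}\lesssim q^{-d/p}$, i.e.
\[ \frac1p\le\frac{d-1}{d}. \]
Next I would test $f=C$. Because $-C=C$ and $0\in C$, one has $A_CC(0)=\frac{1}{|C|}\,|C\cap(-C)|=1$; retaining only the single point $y=0$ in the surface sum gives $\|A_CC\|_{L^r(C,\sigma)}\ge\big(|C|^{-1}\big)^{1/r}\sim q^{-(d-1)/r}$, whereas $\|C\|_{L^p(\mathbb F_q^d,dx)}=\left(|C|/q^d\right)^{1/p}\sim q^{-1/p}$. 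Hence $q^{-(d-1)/r}\lesssim q^{-1/p}$, i.e.
\[ \frac1p\le\frac{d-1}{r}. \]
Together with the trivial constraints $\frac1p\ge 0$ and $\frac1r\le 1$ (the latter being $r\ge1$), these two half-planes have as common intersection exactly the quadrilateral with vertices $(0,0),(0,1),\left(\frac{d-1}{d},1\right)$ and $P_0$: the line $\frac1p=\frac{d-1}{r}$ meets $\frac1p=\frac{d-1}{d}$ precisely at $P_0$. This proves part (1).

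For part (2) I would add one test exploiting the subspace. Let $H\subset C$ with $\dim H=d/2$, so $|H|=q^{d/2}$, and take $f=H$. For $y\in H$ the subspace structure gives $y-H=H$ and $C\cap H=H$, so $A_CH(y)=|H|/|C|\sim q^{1-d/2}$; summing over the $q^{d/2}$ points of $H$ yields $\|A_CH\|_{L^r(C,\sigma)}\gtrsim q^{(1-d/2)(1+1/r)}$, while $\|H\|_{L^p(\mathbb F_q^d,dx)}=\left(q^{d/2}/q^d\right)^{1/p}=q^{-d/(2p)}$. Letting $q\to\infty$ gives $(1-d/2)(1+1/r)\le -d/(2p)$, which rearranges to
\[ \frac dp-(d-2)\,\frac1r\le d-2. \]
A direct check shows that both $P_1$ and $P_2$ lie on the boundary line of this half-plane, that $P_1$ is its intersection with $\frac1p=\frac{d-1}{d}$ while $P_2$ is its intersection with $\frac1p=\frac{d-1}{r}$, and that $P_0$ violates it. Thus adjoining this half-plane to those of part (1) cuts off the corner at $P_0$ and replaces it by the edge $P_1P_2$, producing exactly the pentagon with vertices $(0,0),(0,1),\left(\frac{d-1}{d},1\right),P_1,P_2$.

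The individual computations are routine; the steps needing care are conceptual rather than technical. The key point, and the one most easily missed, is that the constraint $\frac1p\le\frac{d-1}{r}$ does not come from any spreading of $A_CC$ across $C$ but from the single focal point $y=0$, where the symmetry $-C=C$ pins $A_CC(0)$ at its maximal value $1$. The remaining obstacle is purely the bookkeeping that the three derived half-planes, intersected with $\left\{\frac1p\ge0\right\}$ and $\left\{\frac1r\le1\right\}$, reproduce precisely the claimed vertices; this rests on the identities $d^2-3d+2=(d-1)(d-2)$ and $d^2-2d+2=d(d-1)-(d-2)$ underlying the coordinates of $P_2$.
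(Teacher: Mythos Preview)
Your argument is correct: the three test functions $\delta_0$, $C$, and $H$ produce exactly the half-planes whose intersection is the stated quadrilateral (part (1)) or pentagon (part (2)), and your verifications that $P_1,P_2$ lie on the subspace line while $P_0$ does not are accurate. The paper itself gives no proof of this lemma but simply invokes Lemma~2.1 of \cite{KY15}; your direct test-function derivation is the standard route to such necessary conditions and is almost certainly what that reference contains.
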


From the nesting property of norms and the interpolation with the trivial $L^\infty\to L^\infty$ estimate, to prove that the necessary conditions are in fact sufficient, it suffices to obtain the critical point $P_0.$ In addition, to prove the optimal results in the case when the cone $C$ contains a $d/2$-dimensional subspace,  it will be enough to obtain the critical points $P_1$ and $P_2.$ In fact, when $d\ge 3$ is odd, the critical point $P_0$ was obtained in \cite{KY15}, which gives the complete answer to the restricted averaging problem for cones in odd dimensions. On the other hand, when $d\ge 4$ is even, it is in general impossible to obtain the point $P_0,$ because the cone $C$ may contain a $d/2$-dimensional subspace. As we shall see, the cone $C\subset (\mathbb F_q^d, dx)$ contains a $d/2$-dimensional subspace if $d=4k+2$ for $k\in \mathbb N$, or if $-1\in \mathbb F_q$ is a square number and $d\ge 4$ is even. In this case, to settle the restricted averaging problem for cones, it suffices to obtain the critical points $P_1$ and $P_2.$  In this paper, we shall establish the critical points except for dimension four. As a consequence, we give  complete answers to the restricted averaging problems for cones in even dimensions $d\ge 6$ in the case when the cone $C$ contains a $d/2$-dimensional subspace. More precisely, our main theorem is as follows:

\begin{theorem}\label{main} Let $A_C$ be the restricted averaging operator associated with the cone $C\subset (\mathbb F_q^d, dx)$ defined as in \eqref{defcone}.
Suppose that $\sigma$ denotes the normalized surface measure on the cone $C.$ Then, if $d\ge 6$ is even, we have
\begin{equation}\label{eq1.3}
\|A_Cf\|_{L^{d-2}(C, \sigma)} \lesssim \|f\|_{L^{ \frac{d}{d-1}}(\mathbb F_q^d, dx)}
\end{equation}
and if $d\ge 4$ is even, then we have
\begin{equation}\label{eq1.4}
\|A_Cf\|_{L^{\frac{d^2-2d+2}{d-2} }(C, \sigma)} \lesssim \|f\|_{L^{\frac{d^2-2d+2}{d^2-3d+2} }(\mathbb F_q^d, dx)}
\end{equation}
\end{theorem}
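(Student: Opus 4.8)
The plan is to reduce everything, exactly as the discussion preceding the theorem does, to the two endpoint estimates $P_1$ and $P_2$; the full conjectured region then follows by interpolating these with the trivial $L^\infty\to L^\infty$ bound and by nesting of the $L^r(C,\sigma)$ norms. Since $|A_Cf|\le A_C|f|$ pointwise and the $L^p(\mathbb F_q^d,dx)$ norm sees only $|f|$, I may assume $f\ge 0$ throughout. The common computational input is the explicit Fourier transform of the cone from \cite{KY15}, obtained by a Gauss sum evaluation: writing $\sigma^\vee=\frac{q^d}{|C|}C^\vee$ and $Q(m)=m_{d-1}m_d-\frac14\sum_{i=1}^{d-2}m_i^2$ for the dual quadratic form, one has $\sigma^\vee(0)\sim 1$, while $|\sigma^\vee(m)|\sim q^{1-d/2}$ on the punctured dual cone $\{Q(m)=0\}\setminus\{0\}$ and $|\sigma^\vee(m)|\sim q^{-d/2}$ off it. The failure of the regularity condition \eqref{defregular} in even dimensions is localized entirely to the dual-cone frequencies, which correspond in physical space to the $d/2$-dimensional subspaces contained in $C$; the entire difficulty is to show that this degenerate part still obeys the estimates at $P_1$ and $P_2$.

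For $P_1=\left(\frac{d-1}{d},\frac{1}{d-2}\right)$ I would exploit that $r=d-2$ is an \emph{even} integer when $d$ is even. Expanding $\|A_Cf\|_{L^{d-2}(C,\sigma)}^{d-2}=\frac{1}{|C|^{d-1}}\sum_{y\in C}\sum_{x_1,\ldots,x_{d-2}\in C}\prod_{i}f(y-x_i)$ and substituting $u_i=y-x_i$ turns the estimate into a weighted incidence count governed by $N(u_1,\ldots,u_{d-2})=\#\{y\in C:\ y-u_i\in C \text{ for all } i\}$, the number of cone points all of whose prescribed translates remain on the cone. The key step is a sharp bound for $N$ obtained by splitting the configurations $\{u_i\}$ according to the dimension of the subspace they generate: generic configurations force $y$ onto a low-dimensional variety and contribute the ``regular'' amount, whereas the extremal contribution comes precisely from configurations lying in a $d/2$-dimensional subspace $H\subset C$, where $N$ can be as large as $|H|\sim q^{d/2}$. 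Summing these contributions against $\prod_i f(u_i)$ and optimizing by H\"older should yield the bound $\lesssim\|f\|_{L^{d/(d-1)}(\mathbb F_q^d,dx)}^{d-2}$; the arithmetic of balancing the subspace term against the generic term is exactly what forces $d\ge 6$, and is also what upgrades the weak-type information of \cite{KY15} to the strong-type statement \eqref{eq1.3}.

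For $P_2$ the exponent $r=\frac{d^2-2d+2}{d-2}$ is not an even integer, and $P_2$ is a genuine vertex of the necessary region: it cannot be reached by interpolating better estimates, since $A_C$ fails every $L^1\to L^r$ bound (test $f=\mathbb{1}_{\{0\}}$), and the point $\left(\frac{d-2}{d},0\right)$ further along the same subspace line already fails. Hence \eqref{eq1.4} must be established directly and uniformly for all even $d\ge 4$. Here I would pass to the restricted strong-type, dual formulation: it suffices to prove the bound for $f=\mathbb{1}_E$ tested against $g=\mathbb{1}_F$ with $F\subset C$, reducing \eqref{eq1.4} to an incidence inequality among $E$, the cone, and $F$. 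The contribution is again organized by how $E$ and $F$ meet the $d/2$-dimensional subspaces in $C$: the off-subspace part is controlled through the Fourier bound $|\sigma^\vee(m)|\lesssim q^{-d/2}$ by a Plancherel/Cauchy--Schwarz estimate, while the subspace part is counted directly. Balancing the two and then upgrading from restricted strong-type to strong-type by real interpolation (Bourgain's summation scheme) should yield \eqref{eq1.4}; once $P_1$ and $P_2$ are in hand, convexity fills in the whole edge $\overline{P_1P_2}$, which is precisely the line on which the subspace obstruction is tight.

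The main obstacle is the sharp incidence bound for $N(u_1,\ldots,u_{d-2})$, and more precisely the loss-free separation of the subspace contribution from the generic one. The subspace term is genuinely extremal, being the configuration that saturates the necessary condition obtained by testing $A_C$ on $\mathbb{1}_H$, so there is no slack: the counting must be free of logarithmic factors in order to reach the endpoint $r=d-2$ rather than a weak-type surrogate. This is also where the restriction $d\ge 6$ enters, since for $d=4$ the $d/2=2$-dimensional subspace contributes too heavily relative to the available H\"older room, so the even-moment argument closes only at $P_2$ and not at $P_1$ — consistent with the fact that \eqref{eq1.3} is claimed only for $d\ge 6$ while \eqref{eq1.4} is claimed for all even $d\ge 4$.
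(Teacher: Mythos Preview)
Your plan for $P_1$ diverges substantially from the paper and has a genuine gap. You propose to expand the even moment $\|A_Cf\|_{L^{d-2}(C,\sigma)}^{d-2}$ as a $(d-2)$-linear form in $f$ with kernel $N(u_1,\ldots,u_{d-2})$, bound $N$ by a stratified subspace count, and then ``optimise by H\"older''. Even granting a sharp bound on $N$ (which you do not supply and correctly flag as the main obstacle), H\"older applied to such a nonnegative multilinear form against a single $f$ delivers at best the restricted strong-type inequality --- and that was already established in \cite{KY15}. The entire content of the present paper is the upgrade from restricted to genuine strong type at the endpoint, and this upgrade does not come from H\"older on the $(d-2)$-linear expansion.

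The paper instead runs a Fourier-side \emph{bilinear} argument. Writing $A_Cf=f\ast 1+f\ast\widehat{K}$ with $K=\sigma^\vee-\delta_0$, it normalises $f$, decomposes $f=\sum_{j\ge 0}2^{-j}E_j$ into level sets, and squares: $\|f\ast\widehat{K}\|_{L^{d-2}}^{2}=\|(f\ast\widehat{K})^2\|_{L^{(d-2)/2}}$. The resulting double sum is symmetrised to $j\ge k$ and bounded by $\|E_k\ast\widehat{K}\|_{L^\infty(C,\sigma)}\cdot\|E_j\ast\widehat{K}\|_{L^{(d-2)/2}(C,\sigma)}$, with both factors controlled via the explicit decay of $\sigma^\vee$ (Lemma~\ref{Lem2.1}) through Lemma~\ref{Lem2.5} and Lemma~\ref{Lem2.7}. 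The restriction $d\ge 6$ enters only because one needs $(d-2)/2\ge 2$ to interpolate between the available $L^2$ and $L^\infty$ bounds for $E\ast\widehat{K}$; it is not a subspace-counting balance as you suggest. Your plan for $P_2$ is closer in spirit: the paper does dualise and run the same level-set scheme on $A_C^\ast$, using Lemma~\ref{estimateM} and Lemma~\ref{Lem2.10} in place of Lemma~\ref{Lem2.5} and Lemma~\ref{Lem2.7}, and for $d=4$ (where $r/2<2$) replacing the single interpolated bound by the three-regime estimate \eqref{eq2.13} and a case analysis in $|F_k|$ (the $B_1,B_2,B_3$ split in Section~\ref{sec4}). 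What your sketch is missing, for both endpoints, is precisely this asymmetric $L^\infty\times L^{r/2}$ splitting of the squared norm over dyadic level sets --- that is the mechanism that converts the restricted-type input of \cite{KY15} into the strong-type conclusion.
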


In \cite{KY15},  it was proved that the inequality \eqref{eq1.3} holds if $d\ge 4$ is even and the test functions $f$ are characteristic functions on $(\mathbb F_q^d, dx).$
It was also proved in \cite{KY15} that the dual estimate of the inequality \eqref{eq1.4} holds for all characteristic test functions on the cone $C$ in even dimensions $d\ge 4.$ Hence,  Theorem \ref{main} provides the improved endpoint estimates in even dimensions $d\ge 6.$ The estimate \eqref{eq1.4} gives a partial improvement in four dimensions.

\subsection{Remark on sharpness of Theorem \ref{main}}
As mentioned before, we see from Theorem \ref{main} and Lemma \ref{LemN}  that if $d\ge 6$ is even and the cone $C\subset (\mathbb F_q^d, dx)$ contains a $d/2$-dimensional subspace, then
$A_C(p\to r)\lesssim 1$ if and only if $\left(\frac{1}{p},\, \frac{1}{r}\right)$ is contained in the convex hull of the points
$(0,0), (0,1),$
$\left(\frac{d-1}{d}, \, 1\right), \,
P_1:=\left(\frac{d-1}{d}, \, \frac{1}{d-2}\right)$ and $\, P_2:=\left(\frac{d^2-3d+2}{d^2-2d+2}, \, \frac{d-2}{d^2-2d+2}\right).$
Let $\eta$ denote the quadratic character of $\mathbb F_q^*.$
In addition, assume that $H$ denotes a maximal subspace contained in the cone $C\subset (\mathbb F_q^d, dx).$
It is well known that for even dimensions $d\ge 4$ we have
\begin{equation}\label{Hestimate} |H|=\left\{\begin{array}{ll} q^{\frac{d}{2}} \quad&\mbox{if}~~  \eta(-1)=\left(\eta(-1)\right)^{\frac{d}{2}}\\
  q^{\frac{d-2}{2}} \quad &\mbox{if}~~ \eta(-1)=-\left(\eta(-1)\right)^{\frac{d}{2}}\end{array} \right.\end{equation}
(for example, see Lemma 2.1 in \cite{Vi12}). Thus, if $d=4k+2$ for $k\in \mathbb N$, or $ -1\in \mathbb F_q$ is a square number and $d\ge 4$ is even, then the cone $C$ contains a subspace $H$ with $|H|=q^{\frac{d}{2}}.$ In conclusion, Theorem \ref{main} provides the complete mapping properties of the restricted averaging operator $A_C$ in the case when $d=4k+2$ for $k\in \mathbb N$, or $ -1\in \mathbb F_q$ is a square number and $d\ge 6$ is even.

\begin{remark} Notice from Theorem \ref{main} that to settle the restricted averaging problem for the cone $C$  in the case when $d=4$ and $-1\in \mathbb F_q$ is a square number, we only need to prove  the inequality \eqref{eq1.3} for $d=4.$ However, it looks a hard problem  and we leave this as an open question.  \end{remark}

From \eqref{Hestimate} we see that if $-1\in \mathbb F_q$ is not a square number and $d=4k$ for $k\in \mathbb N,$ then $q^{\frac{d-2}{2}}$ is the cardinality of a maximal subspace lying in the  cone $C\subset (\mathbb F_q^d, dx).$ Combining this fact with the first conclusion of Lemma \ref{LemN}, we may conjecture the following.

\begin{conjecture}
Let $C\subset (\mathbb F_q^d, dx)$ be the cone. Assume that $d=4k$ for $k\in\mathbb N$, and $-1\in \mathbb F_q$ is not a square number. Then we have $A_C(p\to r)\lesssim 1$ if and only if
$\left(\frac{1}{p},\, \frac{1}{r}\right)$  lies on the convex hull of points $(0,0), (0,1), \left(\frac{d-1}{d}, \, 1\right)$ and $\,P_0:=\left(\frac{d-1}{d}, \, \frac{1}{d}\right).$
\end{conjecture}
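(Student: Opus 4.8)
\emph{Necessity.} Since $d=4k$ with $-1\in\mathbb F_q$ not a square, we have $\eta(-1)=-1$ while $\eta(-1)^{d/2}=\eta(-1)^{2k}=1$, so the second alternative of \eqref{Hestimate} applies and a maximal subspace $H\subset C$ satisfies $|H|=q^{(d-2)/2}\le q^{(d-1)/2}$. Hence $C$ contains no subspace of cardinality exceeding $q^{(d-1)/2}$, and the required necessary conditions are exactly those furnished by the first conclusion of Lemma~\ref{LemN}. For the converse direction, by the nesting of norms and interpolation with the trivial $L^\infty\to L^\infty$ bound (as recorded above Theorem~\ref{main}), it suffices to establish the single critical endpoint $P_0$, namely
\begin{equation}\label{P0endpoint}
\|A_Cf\|_{L^{d}(C,\sigma)}\lesssim \|f\|_{L^{\frac{d}{d-1}}(\mathbb F_q^d,dx)}.
\end{equation}

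\emph{Fourier profile.} The plan is to analyze \eqref{P0endpoint} on the frequency side. Writing $Q(x)=x_1^2+\cdots+x_{d-2}^2-x_{d-1}x_d$ so that $C=\{Q=0\}$, the identity $C(x)=\tfrac1q\sum_{t\in\mathbb F_q}\chi(tQ(x))$ gives
\[
\sigma^\vee(m)=\frac{q^d}{|C|}\,C^\vee(m)=\frac{1}{|C|\,q}\sum_{t\in\mathbb F_q}\ \sum_{x\in\mathbb F_q^d}\chi\big(tQ(x)+m\cdot x\big).
\]
For $t\ne0$ the inner sum is a nondegenerate Gaussian sum of modulus $q^{d/2}$ carrying the dual phase $\chi\!\left(-\widetilde Q(m)/(4t)\right)$, where $\widetilde Q$ is the dual quadratic form, again of cone type; summing over $t\ne0$ collapses to $-1$ when $\widetilde Q(m)\ne0$ and to $q-1$ when $\widetilde Q(m)=0$. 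With $|C|\sim q^{d-1}$ this yields, for $m\ne0$,
\begin{equation}\label{decayprofile}
\abs{\sigma^\vee(m)}\lesssim q^{-d/2}\ \ \text{if }\widetilde Q(m)\ne0,\qquad
\abs{\sigma^\vee(m)}\lesssim q^{-(d-2)/2}\ \ \text{if }\widetilde Q(m)=0 .
\end{equation}
I stress that these magnitudes hold in \emph{every} even dimension and are blind to whether $-1$ is a square; the hypothesis of the conjecture will be used only in the final step. Writing $\widetilde C=\{m:\widetilde Q(m)=0\}$, I split
\[
A_Cf(y)=\underbrace{f^\vee(0)}_{\text{DC}}+\underbrace{\sum_{m\notin\widetilde C}f^\vee(m)\,\sigma^\vee(m)\,\chi(y\cdot m)}_{=:A_{\mathrm{off}}f(y)}+\underbrace{\sum_{m\in\widetilde C\setminus\{0\}}f^\vee(m)\,\sigma^\vee(m)\,\chi(y\cdot m)}_{=:A_{\mathrm{on}}f(y)},\qquad y\in C .
\]

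\emph{The benign part.} Off the dual cone the first bound in \eqref{decayprofile} reads $\abs{\sigma^\vee(m)}\lesssim q^{-d/2}\le q^{-(d-1)/2}$, which is exactly the regular-variety decay threshold in \eqref{defregular}. Consequently the DC term together with $A_{\mathrm{off}}$ obeys the same $L^{d/(d-1)}\to L^{d}(C,\sigma)$ bound that \cite{KY15} established for genuinely regular varieties at $P_0$, and I would simply reproduce that Stein--Tomas type argument verbatim for this piece.

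\emph{The crux, and the main obstacle.} What remains is $A_{\mathrm{on}}$. By \eqref{decayprofile} its multiplier is, up to the scalar $\eta(-1)$, constant and equal to $q^{-(d-2)/2}$ on all of $\widetilde C\setminus\{0\}$, so $A_{\mathrm{on}}f$ is $q^{-(d-2)/2}$ times the extension to $C$ of $f^\vee$ restricted to the dual cone $\widetilde C$; the estimate $\|A_{\mathrm{on}}f\|_{L^{d}(C,\sigma)}\lesssim\|f\|_{L^{d/(d-1)}}$ is thus a cone-to-cone extension inequality. I expect the natural route is an $L^2$/energy computation followed by interpolation: expanding $\|A_{\mathrm{on}}f\|_{L^2(C,\sigma)}^2$ produces, after using $\sum_{y\in C}\chi(y\cdot n)=q^dC^\vee(n)$, the weighted sum $\sum_{m,m'\in\widetilde C}f^\vee(m)\overline{f^\vee(m')}\,C^\vee(m-m')$, so everything reduces to controlling the additive energy of $\widetilde C$ weighted by $C^\vee$. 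This is precisely where the hypothesis must enter: the Fourier profile \eqref{decayprofile} does not distinguish the good case from the bad one, and the \emph{only} structural difference is that when $d=4k$ and $-1$ is non-square the dual cone $\widetilde C$ (which shares the discriminant class of $C$) contains no subspace larger than $q^{(d-2)/2}$. One must show that this absence of a large subspace prevents the differences $m-m'$ from concentrating on the primal cone, keeping the weighted energy small enough to close \eqref{P0endpoint}. I regard this concentration/energy estimate for $\widetilde C$ as the decisive obstacle; it is exactly the step that collapses once $C$ contains a $d/2$-dimensional subspace, which is why the conjecture is delicate and remains open.
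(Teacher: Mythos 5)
The statement you are proving is stated in the paper as a \emph{conjecture}: the paper offers no proof of it, and explicitly records (after \eqref{Hestimate} and before the conjecture) that settling it ``will be enough to obtain the critical point $P_0$,'' which is left open. Your necessity direction is correct and is exactly the paper's reasoning: with $d=4k$ and $\eta(-1)=-1$ one has $\left(\eta(-1)\right)^{d/2}=1\ne \eta(-1)$, so \eqref{Hestimate} gives $|H|=q^{(d-2)/2}\le q^{(d-1)/2}$ and part (1) of Lemma \ref{LemN} applies; likewise your reduction of sufficiency to the single endpoint $P_0$ matches the paper. But the proposal is not a proof. The decisive step --- the bound for $A_{\mathrm{on}}$, which you correctly reduce to a $C^\vee$-weighted energy estimate for the dual cone $\widetilde C$ --- is only described, not proved: no inequality is ever extracted from the hypothesis that $\widetilde C$ contains no subspace larger than $q^{(d-2)/2}$, and you say yourself that this is ``the decisive obstacle.'' Since that estimate \emph{is} the content of the conjecture (the Fourier profile, as you note, is identical in the good and bad cases, so all the difficulty is concentrated there), the proposal leaves the statement exactly as open as the paper does.

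There is also a secondary technical flaw in the part you call benign. Splitting the multiplier at the dual cone and claiming the off-cone piece satisfies the regular-variety argument ``verbatim'' does not work as stated: the truncated kernel $\widehat{K_{\mathrm{off}}}$ differs from $\widehat{K}$ by $\widehat{K_{\mathrm{on}}}$, and since $\sigma^\vee$ is essentially a constant of size $q^{-(d-2)/2}$ on the $\sim q^{d-1}$ points of $\widetilde C\setminus\{0\}$, one has $|\widehat{K_{\mathrm{on}}}(0)|\sim q^{d/2}$. Hence the analogue of the $L^\infty$ bound \eqref{eq2.4} fails for the truncated kernel: taking $E$ a single point of $C$ gives $\sup_{x\in C}|E\ast \widehat{K_{\mathrm{off}}}(x)|\gtrsim q^{-d/2}$, far above the needed $|E|q^{-d+1}$, and the $L^\infty$--$L^2$ interpolation scheme you invoke breaks at the $L^\infty$ endpoint. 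Note that the paper never splits the operator pointwise for precisely this reason: it keeps the full kernel $K$ for the $L^\infty$ estimate \eqref{eq2.4} and separates the on-cone and off-cone frequencies only \emph{inside} the squared $L^2$ sum, as in Lemma \ref{Biglem}. If you pursue this route, you should organize the decomposition that way; but even so, the on-cone energy estimate remains the missing idea, and without it the sufficiency half of the conjecture is unproven.
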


As seen before, in order to establish this conjecture, it will be enough to obtain the critical point $P_0.$

\subsection{Contents of the remain parts of this paper}
 The remain parts of this paper will be organized as follows.
In Section \ref{sec2},  we introduce preliminary key lemmas which play a crucial role in proving Theorem \ref{main}.
The proof of the inequalities \eqref{eq1.3} and \eqref{eq1.4} in Theorem \ref{main} will be given in Sections \ref{sec3} and \ref{sec4}, respectively.

\section{Preliminaly lemmas} \label{sec2}
In this section, we collect several lemmas most of which are implicitly contained in \cite{KY15}.
Let us denote by $A_C^*$ the adjoint operator of the restricted averaging operator $A_C$ to the cone $C\subset (\mathbb F_q^d, dx).$
Since $A_Cf =f\ast \sigma |_C$, it follows  that
 $$< A_C f,~ h>_{L^2(C, \sigma)} =<f, ~A_C^*h>_{L^2(\mathbb F_q^d, dx)},$$
where we recall that $\sigma$ is the normalized surface measure on the cone $C.$
From this, we see that the adjoint operator $A_C^*$ is given by
$$A_C^* h(y)= \frac{q^d}{|C|^2} \sum_{x\in C} C(x-y) h(x)$$
where $h:(C, \sigma)\to \mathbb C$ and $y\in (\mathbb F_q^d, dx).$
Since $C= -C,$ we can alternatively write that for all functions
$h: (\mathbb F_q^d, dx) \to \mathbb C$ with $h(x)=0$ for $x\in \mathbb F_q^d \setminus C.$
\begin{equation}\label{dualoper} A_C^*h=\frac{q^{2d}}{|C|^2}~ (hC)\ast C =\frac{q^{2d}}{|C|^2}~ h\ast C.\end{equation}

We aim to find the exponents $1\le p,r\le \infty$ such that
$$ \|A_C f\|_{L^r(C, \sigma)}  \lesssim \|f\|_{L^p(\mathbb F_q^d, dx).}$$
By duality, this equality is same as the following inequality
$$ \|A_C^* h\|_{ L^{p^\prime}(\mathbb F_q^d, dx)} \lesssim \|h\|_{L^{r^\prime}(C, \sigma)},$$
where $p^\prime=p/(p-1)$ and $r^\prime=r/(r-1).$

\subsection{ Decomposition of the restricted averaging operator}
Define a function $K$ on $(\mathbb F_q^d, dm)$ by
\begin{equation}\label{defK} K(m)=\sigma^\vee (m) -\delta_{\bf 0} (m),\end{equation}
where $\delta_{\bf 0}(m)=1$ for $m=(0,\ldots, 0)$ and $0$ otherwise.
Then we can write $ \sigma(x)=\widehat{K}(x) + \widehat{\delta_{\bf 0}} (x) =\widehat{K}(x) +1.$
Thus, the restricted averaging operator $A_C$ to the cone $C$ can be decomposed by
\begin{equation}\label{decK} A_C f= f\ast \sigma= f\ast 1 + f\ast \widehat{K}.\end{equation}
Observe from the definition of $K$ that $K(m)=\sigma^\vee(m)$ for $m\ne (0, \ldots, 0)$ and $ K(0, \ldots, 0)=0.$
Then the following lemma is a direct result from Corollary 4.4 in \cite{KY15}.
\begin{lemma}\label{Lem2.1} Let $\sigma$ be the normalized surface measure on the cone $C\subset (\mathbb F_q^d, dx)$ defined as in \eqref{defcone}. Define  $K(m)=\sigma^\vee(m)-\delta_0(m)$ for $m\in (\mathbb F_q^d, dm)$ and $\Gamma(\xi)=\xi_1^2+\xi_2^2+\cdots+\xi_{d-2}^2-4\xi_{d-1}\xi_d$ for $\xi=(\xi_1, \ldots, \xi_d)\in \mathbb F_q^d.$
If the dimension $d\ge 4$ is even and $m\ne (0, \ldots,0) \in \mathbb F_q^d,$ then we have
$$ |K(m)|=|\sigma^\vee(m)|\sim \left\{\begin{array}{ll} q^{-\frac{(d-2)}{2}} ~~ &\mbox{for} ~~\Gamma(m)=0 \\
                                                                                    q^{- \frac{d}{2}}  ~~ &\mbox{for}~~ \Gamma(m)\ne 0. \end{array} \right.$$
                                                                                    In addition, we have $K(0, \ldots, 0) =0.$
\end{lemma}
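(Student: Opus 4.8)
The plan is to compute $\sigma^\vee(m)$ explicitly by Gauss sums and read the two regimes directly off the resulting formula. Recalling from Remark \ref{remark1} that $\sigma=\frac{q^d}{|C|}\,C$, the inverse Fourier transform collapses to the character sum
\[
\sigma^\vee(m)=\frac{1}{|C|}\sum_{x\in C}\chi(m\cdot x).
\]
First I would encode the defining equation of the cone through the additive-character identity $C(x)=q^{-1}\sum_{t\in\mathbb F_q}\chi\!\left(t\,Q(x)\right)$, where $Q(x)=x_1^2+\cdots+x_{d-2}^2-x_{d-1}x_d$. Substituting this and isolating the $t=0$ term, which contributes $q^{d-1}\delta_0(m)$ and therefore vanishes for $m\neq(0,\dots,0)$, leaves a sum over $t\in\mathbb F_q^*$ of a character sum that factors completely across the $d$ coordinates.

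Next I would evaluate the factorized sum. For each of the first $d-2$ coordinates, completing the square gives the standard quadratic Gauss sum $\sum_{x_j}\chi(t x_j^2+m_j x_j)=\eta(t)\,\mathfrak g\,\chi\!\left(-m_j^2/(4t)\right)$, where $\mathfrak g$ is the quadratic Gauss sum satisfying $\mathfrak g^2=\eta(-1)q$. For the two ``hyperbolic'' coordinates $x_{d-1},x_d$, summing in $x_{d-1}$ first produces a Kronecker delta forcing $x_d=m_{d-1}/t$, so the pair contributes $q\,\chi(m_{d-1}m_d/t)$. Multiplying these, the decisive simplifications are that $\eta(t)^{d-2}=1$ because $d-2$ is even (this is precisely where the even-dimension hypothesis enters), that $\mathfrak g^{\,d-2}=\eta(-1)^{\frac{d-2}{2}}q^{\frac{d-2}{2}}$, and that the accumulated phase collapses to $\chi\!\left(-\Gamma(m)/(4t)\right)$ since $-\tfrac14\sum_{j=1}^{d-2}m_j^2+m_{d-1}m_d=-\tfrac14\Gamma(m)$.

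The final step is the sum over $t\in\mathbb F_q^*$ of $\chi\!\left(-\Gamma(m)/(4t)\right)$, which is exactly where the dichotomy emerges: if $\Gamma(m)=0$ every summand equals $1$ and the sum is $q-1$, whereas if $\Gamma(m)\neq0$ the argument runs over all of $\mathbb F_q^*$ as $t$ does, and orthogonality of $\chi$ yields $-1$. Assembling everything gives, for $m\neq(0,\dots,0)$,
\[
\sum_{x\in C}\chi(m\cdot x)=
\begin{cases}
\eta(-1)^{\frac{d-2}{2}}\,q^{\frac{d}{2}-1}(q-1) & \text{if } \Gamma(m)=0,\\
-\,\eta(-1)^{\frac{d-2}{2}}\,q^{\frac{d}{2}-1} & \text{if } \Gamma(m)\neq0.
\end{cases}
\]
Taking $m=0$ (which lies in the case $\Gamma(m)=0$) recovers $|C|=q^{d-1}+\eta(-1)^{\frac{d-2}{2}}q^{\frac{d}{2}-1}(q-1)\sim q^{d-1}$, whence $\sigma^\vee(0)=1$ and $K(0,\dots,0)=\sigma^\vee(0)-1=0$. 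Dividing the displayed formula by $|C|\sim q^{d-1}$ then produces $|\sigma^\vee(m)|\sim q^{-(d-2)/2}$ on $\{\Gamma(m)=0\}$ and $|\sigma^\vee(m)|\sim q^{-d/2}$ off it, as claimed.

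I expect the only delicate bookkeeping to be the parity tracking of the Gauss-sum powers $\eta(t)^{d-2}$ and $\mathfrak g^{\,d-2}$, together with confirming that the subleading term $\eta(-1)^{\frac{d-2}{2}}q^{\frac{d}{2}-1}(q-1)$ in $|C|$ is genuinely of lower order than $q^{d-1}$ (valid since $d/2<d-1$ for $d>2$), so that the two-sided relation $\sim$, rather than a mere upper bound, is justified. Both points are routine once the coordinate factorization above is set up.
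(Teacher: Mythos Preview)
Your computation is correct. The paper itself does not prove this lemma at all: it simply states that the result ``is a direct result from Corollary 4.4 in \cite{KY15}'' and moves on. What you have done is supply a self-contained proof via the standard additive-character encoding of the quadric and explicit evaluation by Gauss sums, which is presumably close in spirit to whatever appears in \cite{KY15}. The key simplifications you identify---that $\eta(t)^{d-2}\equiv 1$ because $d$ is even, and that the hyperbolic pair $(x_{d-1},x_d)$ contributes a delta constraint rather than a Gauss-sum factor---are exactly the mechanisms that produce the clean dichotomy in $\Gamma(m)$, and your final count and normalization by $|C|\sim q^{d-1}$ are accurate. One minor remark: the assertion $K(0,\dots,0)=0$ is immediate from the definition since $\sigma^\vee(0)=|C|^{-1}\sum_{x\in C}1=1$, so your detour through the explicit formula for $|C|$ is not needed for that part (though the formula is itself correct and useful as a sanity check).
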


The following Fourier restriction estimate was given in Lemma 3.1 in \cite{KY15}.
\begin{lemma}\label{Lem2.2} Let $\sigma$ be the normalized surface measure on the cone $C \subset (\mathbb F_q^d, dx).$
Then we have
$$ \|\widehat{g}\|_{L^2(C, \sigma)} \lesssim q^{\frac{1}{2}} \|g\|_{L^2(\mathbb F_q^d, dm)}\quad \mbox{for all} ~~g:(\mathbb F_q^d, dm) \to \mathbb C.$$
\end{lemma}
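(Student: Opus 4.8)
The plan is to recognize the claim as the elementary (``trivial'') $L^2\to L^2$ restriction estimate, which in the finite field setting requires nothing more than the Plancherel theorem together with the size of the cone; in particular it does not use the refined Fourier decay of $\sigma$ recorded in Lemma \ref{Lem2.1}. First I would unwind the left-hand side using Remark \ref{remark1}, which identifies $\sigma$ with the density $\frac{q^d}{|C|}\,C(x)$ on $(\mathbb F_q^d, dx)$, so that
\[
\|\widehat{g}\|_{L^2(C,\sigma)}^2=\int_{\mathbb F_q^d}|\widehat{g}(x)|^2\,\sigma(x)\,dx=\frac{1}{|C|}\sum_{x\in C}|\widehat{g}(x)|^2 .
\]

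Next I would simply drop the restriction to the cone, bounding the nonnegative summand by its sum over all of $\mathbb F_q^d$, i.e. using $\|\sigma\|_{L^\infty(\mathbb F_q^d,dx)}=q^d/|C|$, to get
\[
\frac{1}{|C|}\sum_{x\in C}|\widehat{g}(x)|^2\le \frac{q^d}{|C|}\cdot\frac{1}{q^d}\sum_{x\in \mathbb F_q^d}|\widehat{g}(x)|^2 .
\]
The Plancherel theorem stated in the excerpt gives $\frac{1}{q^d}\sum_{x}|\widehat{g}(x)|^2=\|g\|_{L^2(\mathbb F_q^d,dm)}^2$, obtained by applying it to $f=\widehat{g}$, for which $(\widehat g)^\vee=g$ by Fourier inversion. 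Finally I would insert the size estimate $|C|\sim q^{d-1}$, which holds for the cone in every dimension and follows from a direct count of solutions of the defining equation \eqref{defcone}. This yields $q^d/|C|\sim q$, hence $\|\widehat{g}\|_{L^2(C,\sigma)}^2\lesssim q\,\|g\|_{L^2(\mathbb F_q^d,dm)}^2$, and taking square roots produces exactly the asserted bound with the factor $q^{1/2}$.

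I do not expect any genuine obstacle here: the only quantitative input is the cardinality estimate $|C|\sim q^{d-1}$, and everything else is soft (Plancherel and positivity). An equivalent route, which I would mention but not prefer, is the $TT^*$/bilinear expansion: writing $|\widehat g(x)|^2$ out and summing against $\sigma$ produces the quadratic form $\sum_{m,m'} g(m)\overline{g(m')}\,\sigma^\vee(m'-m)$, whose associated convolution operator has $L^2(\mathbb F_q^d,dm)$ operator norm equal to $\|\widehat{\sigma^\vee}\|_\infty=\|\sigma\|_\infty\sim q$; this reproduces the same constant but is less direct. The point worth flagging is that, in contrast with the genuinely hard estimates toward $P_1$ and $P_2$ in Theorem \ref{main}, this $L^2$ endpoint is the easy ingredient: it carries no information about the anomalous decay on $\Gamma(m)=0$, and its only role is to be combined, via interpolation and duality, with the sharper decay of Lemma \ref{Lem2.1}.
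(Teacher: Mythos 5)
Your proposal is correct, and it takes the dual (but genuinely distinct in presentation) route to the paper's own argument. The paper dualizes first: it reduces the lemma to the extension estimate $\|(f\sigma)^{\vee}\|_{L^2(\mathbb F_q^d, dm)} \sim q^{1/2}\|f\|_{L^2(C,\sigma)}$ and verifies this as an exact Plancherel identity, using that $f\sigma=\frac{q^d}{|C|}fC$ is supported on the cone, so no terms are discarded and the computation yields a two-sided equivalence. You instead prove the restriction inequality directly, getting a one-sided bound by positivity: you discard $|\widehat{g}(x)|^2$ off the cone, which is the same as bounding the density $\sigma$ by its sup $q^d/|C|$, and then apply Plancherel via the inversion $(\widehat{g})^{\vee}=g$, which is indeed valid in the paper's normalization by the orthogonality relation. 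Both arguments consume exactly the same two inputs, Plancherel and $|C|\sim q^{d-1}$, and both produce the constant $q^{1/2}$; what the paper's version buys is the observation that the dual extension estimate is sharp (an equivalence, not just an upper bound), while your version is marginally more elementary in that it never invokes duality. Your $TT^{*}$ aside is also correct: the quadratic form $\sum_{m,m'} g(m)\overline{g(m')}\,\sigma^{\vee}(m'-m)$ is controlled by the multiplier bound $\|\sigma\|_{\infty}=q^d/|C|\sim q$, reproducing the same constant. You are also right that this lemma is the soft ingredient carrying no information about the decay dichotomy on $\Gamma(m)=0$; its only role in the paper is to feed the $L^2$ endpoint into the proof of Lemma \ref{Lem2.5}.
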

\begin{proof} By duality, it is enough to prove the following extension estimate:
$$ \|(f\sigma)^{\vee} \|_{L^2(\mathbb F_q^d, dm)} \sim q^{\frac{1}{2}} \|f\|_{L^2(C, \sigma)} \quad \mbox{for all} ~~f: C \to \mathbb C.$$
Since $\sigma(x)=\frac{q^d}{|C|} C(x) $ and $|C|\sim q^{d-1},$  the Plancherel theorem yields
\begin{align*}\|(f\sigma)^{\vee} \|_{L^2(\mathbb F_q^d, dm)} &= \frac{q^d}{|C|}\| (fC)^\vee \|_{L^2(\mathbb F_q^d, dm)} = \frac{q^d}{|C|} \|fC\|_{L^2(\mathbb F_q^d, dx)} \\
&=\frac{q^{d/2}}{|C|^{1/2}} \|f\|_{L^2(C, \sigma)} \sim q^{\frac{1}{2}} \|f\|_{L^2(C, \sigma)}.\end{align*}
\end{proof}

The following lemma was also given in Lemma 4.5 in \cite{KY15}.
\begin{lemma}\label{Lem2.3}Let $C^*=\{m\in \mathbb F_q^d:  \Gamma(m)=0\}.$ If the dimension, $d\ge 4$ is even, then we have
$$ \sum_{m\in C^*} |E^\vee(m)|^2 \lesssim q^{-d-1}|E|+ q^{-\frac{3d}{2}} |E|^2\quad\mbox{for all}~~E\subset (\mathbb F_q^d, dx).$$
\end{lemma}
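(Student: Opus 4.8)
The plan is to expand the left-hand side by the orthogonality of $\chi$ and reduce everything to the pointwise size of the Fourier transform of the indicator of the dual cone $C^*$, which is controlled by the same Gauss-sum estimate that underlies Lemma \ref{Lem2.1}.

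First I would write $|E^\vee(m)|^2=\frac{1}{q^{2d}}\sum_{x,y\in E}\chi(m\cdot(x-y))$, sum over $m\in C^*$, and interchange the order of summation to obtain
$$\sum_{m\in C^*}|E^\vee(m)|^2=\frac{1}{q^{2d}}\sum_{x,y\in E}\widehat{C^*}(x-y),$$
where $\widehat{C^*}(w)=\sum_{m\in C^*}\chi(-m\cdot w)$ and we used that $\widehat{C^*}$ is even because $\Gamma$ is. Thus the estimate is reduced to a pointwise bound on $\widehat{C^*}$ together with a count of the pairs $(x,y)\in E\times E$.

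The two inputs I need are: $\widehat{C^*}(0)=|C^*|\sim q^{d-1}$, since $C^*$ is a cone of the same type as $C$; and the uniform decay $|\widehat{C^*}(w)|\lesssim q^{d/2}$ for every $w\ne 0$. For the latter I would use the representation $C^*(m)=q^{-1}\sum_{t\in\mathbb F_q}\chi(t\Gamma(m))$. The term $t=0$ contributes $q^{d-1}\delta_0(w)$, while for each $t\ne 0$ one completes the square in $m$ and evaluates the resulting nondegenerate quadratic Gauss sum, of modulus $q^{d/2}$; since $d$ is even we have $\eta(t)^{d}=1$, so the Gauss-sum normalization is independent of $t$ and the sum over the $q-1$ remaining phases is $O(1)$. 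This is precisely the computation that yields Lemma \ref{Lem2.1}, and it gives $|\widehat{C^*}(w)|\lesssim q^{d/2}$ for $w\ne 0$. (Equivalently, $C^*$ is the image of $C$ under an invertible linear map rescaling the last coordinate, so this is the dual form of Lemma \ref{Lem2.1} with $\Gamma$ and the defining form of $C$ interchanged.)

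Finally I would split the double sum into the diagonal $x=y$ and the off-diagonal $x\ne y$ and apply the triangle inequality. The diagonal contributes $\frac{1}{q^{2d}}|C^*|\,|E|\lesssim q^{-d-1}|E|$, and the off-diagonal consists of at most $|E|^2$ terms each of size $\lesssim q^{d/2}$, contributing $\lesssim q^{-3d/2}|E|^2$; adding these gives the claimed bound. I expect no real obstacle beyond the Fourier input: the off-diagonal is handled by the trivial count $|E|^2$ of pairs, with no cancellation needed, so the only genuinely nontrivial ingredient is the estimate $|\widehat{C^*}(w)|\lesssim q^{d/2}$, i.e.\ the Gauss-sum computation for the cone, which is already available through Lemma \ref{Lem2.1}. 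As a sanity check the bound is sharp in both terms---$E=\{0\}$ saturates the first term and a $d/2$-dimensional subspace contained in $C$ saturates the second---so the lossy-looking triangle inequality is in fact lossless in the relevant ranges.
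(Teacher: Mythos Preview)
Your argument is correct. The paper does not prove this lemma at all; it simply cites it as Lemma~4.5 of \cite{KY15}, so there is no in-paper proof to compare against. Your approach---expanding the square, interchanging sums, and reducing to the pointwise bound $|\widehat{C^*}(w)|\lesssim q^{d/2}$ for $w\ne 0$ together with $|C^*|\sim q^{d-1}$---is the standard and natural route, and is almost certainly what lies behind the cited result as well.

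One small imprecision: after completing the square you write that ``the sum over the $q-1$ remaining phases is $O(1)$.'' In fact $\sum_{t\ne 0}\chi(c/t)$ equals $-1$ when $c\ne 0$ but equals $q-1$ when $c=0$ (i.e.\ when $w$ lies on the dual cone), so the sum is only $O(q)$ in general. This does not damage the conclusion, since the prefactor $q^{-1}$ absorbs that loss and you still get $|\widehat{C^*}(w)|\lesssim q^{d/2}$; indeed the large case $c=0$ is exactly what produces the $q^{-(d-2)/2}$ regime in Lemma~\ref{Lem2.1}. Your alternative route---observing that $C^*$ is the image of $C$ under the invertible linear map $(x_1,\ldots,x_{d-2},x_{d-1},x_d)\mapsto (x_1,\ldots,x_{d-2},x_{d-1}/2,x_d/2)$ and then reading off the Fourier bound directly from Lemma~\ref{Lem2.1}---avoids this wrinkle entirely and is the cleaner way to phrase the Fourier input.
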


We shall invoke the following result.
\begin{lemma}\label{Biglem}
Let $\sigma$ be the normalized surface measure on the cone
$C \subset (\mathbb F_q^d, dx).$ Then if $d\ge 4$ is even, the estimate
$$ \sum_{m\in \mathbb F_q^d \setminus \{(0,\ldots,0)\}}
\left|E^{\vee}(m) ~\sigma^{\vee}(m)\right|^2 \lesssim \min\left\{ q^{-2d+2} |E|,~ q^{-2d+1}|E|+ q^{\frac{-5d+4}{2}}|E|^2\right\}$$
holds for all sets $E\subset (\mathbb F_q^d, dx).$
\end{lemma}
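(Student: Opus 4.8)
The plan is to establish the two bounds inside the minimum separately, since proving
$$\sum_{m\ne (0,\ldots,0)}\left|E^\vee(m)\,\sigma^\vee(m)\right|^2 \lesssim q^{-2d+2}|E|$$
and
$$\sum_{m\ne (0,\ldots,0)}\left|E^\vee(m)\,\sigma^\vee(m)\right|^2 \lesssim q^{-2d+1}|E| + q^{\frac{-5d+4}{2}}|E|^2$$
at the same time yields the asserted estimate. Throughout I would rely on two ingredients that are already available. First, since $K(m)=\sigma^\vee(m)$ for $m\ne (0,\ldots,0)$, Lemma \ref{Lem2.1} supplies the decay dichotomy $|\sigma^\vee(m)|\sim q^{-\frac{d-2}{2}}$ when $\Gamma(m)=0$ and $|\sigma^\vee(m)|\sim q^{-\frac{d}{2}}$ when $\Gamma(m)\ne 0$. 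Second, the Plancherel theorem gives the global mass bound $\sum_{m}|E^\vee(m)|^2 = q^{-d}|E|$.

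For the first bound I would merely extract the uniform decay. As $q^{-\frac{d-2}{2}}\ge q^{-\frac{d}{2}}$, Lemma \ref{Lem2.1} gives $\max_{m\ne (0,\ldots,0)}|\sigma^\vee(m)|^2 \lesssim q^{-(d-2)}$, and therefore
$$\sum_{m\ne (0,\ldots,0)}\left|E^\vee(m)\,\sigma^\vee(m)\right|^2 \le \left(\max_{m\ne (0,\ldots,0)}|\sigma^\vee(m)|^2\right)\sum_{m}|E^\vee(m)|^2 \lesssim q^{-(d-2)}\, q^{-d}|E| = q^{-2d+2}|E|,$$
which is precisely the first quantity.

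For the second, finer bound I would split the sum according to whether $m$ lies on $C^*=\{m:\Gamma(m)=0\}$. Off $C^*$, the smaller decay $|\sigma^\vee(m)|^2\sim q^{-d}$ combined with Plancherel gives $\sum_{m\notin C^*}|E^\vee(m)\,\sigma^\vee(m)|^2 \lesssim q^{-d}\,q^{-d}|E| = q^{-2d}|E|$. On $C^*\setminus\{(0,\ldots,0)\}$ the larger decay $|\sigma^\vee(m)|^2\sim q^{-(d-2)}$ must be paired with the refined restriction estimate of Lemma \ref{Lem2.3}, giving
$$\sum_{m\in C^*\setminus\{(0,\ldots,0)\}}\left|E^\vee(m)\,\sigma^\vee(m)\right|^2 \lesssim q^{-(d-2)}\sum_{m\in C^*}|E^\vee(m)|^2 \lesssim q^{-(d-2)}\left(q^{-d-1}|E|+q^{-\frac{3d}{2}}|E|^2\right),$$
and the right-hand side equals $q^{-2d+1}|E|+q^{\frac{-5d+4}{2}}|E|^2$. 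Summing the two ranges and absorbing $q^{-2d}|E|$ into $q^{-2d+1}|E|$ reproduces the second quantity.

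I expect no genuine obstacle here, because all the hard analysis is already packaged in the cited lemmas: Lemma \ref{Lem2.1} encodes the anisotropic Fourier decay of the cone measure, with the degeneracy on $C^*$ responsible for the non-regular behaviour in even dimensions, while Lemma \ref{Lem2.3} is the $L^2$ restriction bound that improves on the crude Plancherel count exactly on the degenerate set $C^*$. The only point requiring care is the exponent bookkeeping: one must check that the enhanced decay $q^{-\frac{d-2}{2}}$ on $C^*$ is exactly compensated by the gain in Lemma \ref{Lem2.3}, so that the two regimes balance into the stated exponents. Beyond this arithmetic, the argument is pure assembly.
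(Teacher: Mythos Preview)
Your proposal is correct and follows essentially the same approach as the paper: both arguments obtain the crude bound $q^{-2d+2}|E|$ by combining the uniform decay $\max_{m\ne 0}|\sigma^\vee(m)|\lesssim q^{-(d-2)/2}$ from Lemma~\ref{Lem2.1} with Plancherel, and obtain the refined bound by splitting according to whether $\Gamma(m)=0$, applying the sharper decay $q^{-d/2}$ off $C^*$ together with Lemma~\ref{Lem2.3} on $C^*$. The exponent bookkeeping you flag is exactly the computation the paper carries out.
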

\begin{proof}
Let $\Gamma$ be the function defined as in the statement of Lemma \ref{Lem2.1}.
We write
$$ \sum_{m \ne (0,\ldots,0)}
\left|E^{\vee}(m) ~\sigma^{\vee}(m)\right|^2
=\sum_{\substack{m\ne (0,\ldots,0)\\
: \Gamma(m)\ne 0}}
\left|E^{\vee}(m) ~\sigma^{\vee}(m)\right|^2 + \sum_{\substack{m\ne (0,\ldots,0)\\
: \Gamma(m)= 0}}
\left|E^{\vee}(m) ~\sigma^{\vee}(m)\right|^2.$$

Applying Lemma \ref{Lem2.1}, Lemma \ref{Lem2.3}, and  the Plancherel theorem, we see that
\begin{align*}
\sum_{m \ne (0,\ldots,0)}
\left|E^{\vee}(m) ~\sigma^{\vee}(m)\right|^2
&\lesssim q^{-d} \sum_{m\in \mathbb F_q^d} |E^{\vee}(m)|^2 + q^{-d+2} \sum_{\Gamma(m)=0} |E^{\vee}(m)|^2\\
&\lesssim q^{-d} q^{-d}|E| + \left( q^{-2d+1} |E| + q^{\frac{-5d+4}{2}}
|E|^2\right)\\
&\lesssim q^{-2d+1}|E| + q^{\frac{-5d+4}{2}} |E|^2.
\end{align*}
On the other hand, we also see from Lemma \ref{Lem2.1} and the Plancherel theorem that
$$ \sum_{m \ne (0,\ldots,0)}
\left|E^{\vee}(m) ~\sigma^{\vee}(m)\right|^2 \lesssim q^{-d+2} \sum_{m\in \mathbb F_q^d} |E^\vee(m)|^2 =q^{-2d+2}|E|.$$
Putting all estimates together, we obtain the statement of the lemma.
\end{proof}

The following lemma will play an important role in deriving our main result.
\begin{lemma}\label{Lem2.5} Let $K$ be defined as in \eqref{defK}. If the dimension $d\ge 4$ is even,  then the estimates
\begin{equation}\label{eq2.4} \|E\ast \widehat{K}\|_{L^\infty(C, \sigma)} \lesssim \frac{|E|}{q^{d-1}} \end{equation}
and
\begin{equation} \label{eq2.5}
\|E\ast \widehat{K}\|_{L^2(C, \sigma)} \lesssim
\min\left\{ q^{\frac{-2d+3}{2}} |E|^{\frac{1}{2}}, ~ q^{-d+1} |E|^{\frac{1}{2}} + q^{\frac{-5d+6}{4}} |E|\right\}
\end{equation}
hold for all $E \subset (\mathbb F_q^d, dx).$
\end{lemma}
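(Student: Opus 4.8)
The plan is to prove the two displayed estimates \eqref{eq2.4} and \eqref{eq2.5} by exploiting the pointwise Fourier relation $\widehat{K} = \sigma - 1$ (equivalently $E \ast \widehat{K} = E\ast \sigma - E\ast 1$) together with the Fourier-side information already assembled in Lemma \ref{Lem2.1} and Lemma \ref{Biglem}. Since $(E\ast \widehat{K})^\vee(m) = E^\vee(m)\,\widehat{K}^\vee(m)$ and $\widehat{K}^\vee = K$, and since $K$ is supported away from the origin with $K(m)=\sigma^\vee(m)$ for $m\ne \mathbf 0$, I would first express $E\ast\widehat{K}$ by Fourier inversion as
\begin{equation}\label{eq:planinv}
E\ast\widehat{K}(y) = \sum_{m\ne \mathbf 0} \chi(m\cdot y)\, E^\vee(m)\,\sigma^\vee(m).
\end{equation}
Everything then reduces to controlling this exponential sum, uniformly in $y\in C$ for the $L^\infty$ bound, and in an averaged sense over $C$ for the $L^2$ bound.

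For the $L^\infty$ estimate \eqref{eq2.4}, I would bound \eqref{eq:planinv} crudely in absolute value and split the sum according to whether $\Gamma(m)=0$ or $\Gamma(m)\ne 0$, inserting the decay rates from Lemma \ref{Lem2.1}: $|\sigma^\vee(m)|\sim q^{-(d-2)/2}$ on the cone $\Gamma(m)=0$ and $\sim q^{-d/2}$ off it. Applying Cauchy--Schwarz against $\sum_m |E^\vee(m)|^2 = q^{-d}|E|$ (Plancherel) on each piece, and using the size $|C^*|\sim q^{d-1}$ of the dual cone, should produce the target $|E|/q^{d-1}$. Alternatively, and perhaps more cleanly, I expect the bound \eqref{eq2.4} to follow directly from the trivial estimate $\|E\ast\widehat K\|_{L^\infty}\le \|E\ast\sigma\|_{L^\infty}+\|E\ast 1\|_{L^\infty}$, since $E\ast 1 = |E|/q^d$ is constant and $E\ast\sigma$ is an average of $E$ over cone-translates of size $\sim q^{d-1}$, giving $\|E\ast\sigma\|_\infty \lesssim |E|/q^{d-1}$ outright.

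For the $L^2$ estimate \eqref{eq2.5}, the key observation is that the Fourier restriction inequality of Lemma \ref{Lem2.2} converts the $L^2(C,\sigma)$ norm of a function of the form $\widehat g$ into an $L^2(dm)$ norm. Writing $E\ast\widehat{K} = \widehat{g}$ with $g(m)=E^\vee(m)\,\sigma^\vee(m)\cdot\mathbf 1_{m\ne \mathbf 0}$ from \eqref{eq:planinv}, Lemma \ref{Lem2.2} yields
$$
\|E\ast\widehat K\|_{L^2(C,\sigma)}^2 \;\lesssim\; q \sum_{m\ne \mathbf 0} \bigl|E^\vee(m)\,\sigma^\vee(m)\bigr|^2.
$$
Now I would feed in Lemma \ref{Biglem}, which bounds exactly this sum by $\min\{q^{-2d+2}|E|,\; q^{-2d+1}|E| + q^{(-5d+4)/2}|E|^2\}$. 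Multiplying by $q$ and taking square roots converts the first branch into $q^{(-2d+3)/2}|E|^{1/2}$ and the second into $(q^{-2d+2}|E| + q^{(-5d+6)/2}|E|^2)^{1/2}$; using $\sqrt{a+b}\le\sqrt a+\sqrt b$ on the second branch gives $q^{-d+1}|E|^{1/2} + q^{(-5d+6)/4}|E|$, which is precisely the claimed minimum. The main obstacle, and the step I would verify most carefully, is the legitimacy of applying Lemma \ref{Lem2.2} to $\widehat g$ restricted to $C$: one must confirm that $E\ast\widehat K$ genuinely has the form $\widehat g$ for the stated $g$ (which is immediate from \eqref{eq:planinv} once the Fourier conventions are matched) so that the restriction estimate applies with the correct constant $q^{1/2}$, after which everything else is bookkeeping with the two branches supplied by Lemma \ref{Biglem}.
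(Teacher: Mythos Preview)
Your proposal is essentially correct and mirrors the paper's proof: for \eqref{eq2.5} you write $E\ast\widehat K=\widehat{E^\vee K}$, apply Lemma~\ref{Lem2.2} to get the factor $q$, and then invoke Lemma~\ref{Biglem}, exactly as the paper does; for \eqref{eq2.4} your ``alternative'' route via $\widehat K=\sigma-1$ and the trivial bounds $\|E\ast\sigma\|_\infty\le |E|/|C|$, $E\ast 1=|E|/q^d$ is precisely the paper's argument (the paper phrases it as $|\widehat K(y)|=|\sigma(y)-1|\lesssim q$ followed by Young's inequality, which is the same thing).

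One correction: your \emph{first} suggested route to \eqref{eq2.4}, namely bounding \eqref{eq:planinv} in absolute value and applying Cauchy--Schwarz against Plancherel on the two pieces $\Gamma(m)=0$ and $\Gamma(m)\ne 0$, does \emph{not} produce the target. Carrying it out gives
\[
q^{-(d-2)/2}\,|C^*|^{1/2}\,\Bigl(\sum_m|E^\vee(m)|^2\Bigr)^{1/2}
\;\sim\; q^{-(d-2)/2}\,q^{(d-1)/2}\,q^{-d/2}|E|^{1/2}
\;=\; q^{(1-d)/2}|E|^{1/2},
\]
which is weaker than $|E|/q^{d-1}$ whenever $|E|<q^{d-1}$ (the off-cone piece is even smaller and does not help). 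The oscillation in $\chi(m\cdot y)$ that you discard is exactly what the direct $\widehat K=\sigma-1$ argument captures. So drop the first route and keep only the alternative; then your proof matches the paper's.
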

\begin{proof} To prove the inequality \eqref{eq2.4},  observe from Remark \ref{remark1} that
$$ \max_{y\in \mathbb F_q^d} |\widehat{K}(y)| =\max_{y\in \mathbb F_q^d} | \sigma(y)-1| = \max_{y\in \mathbb F_q^d} \left|\frac{q^d C(y)}{|C|} -1\right| \le \frac{q^d}{|C|} \sim q,$$
where we used that $|C|\sim q^{d-1}.$
Then it follows that for any $x\in C,$
$$  |E\ast \widehat{K}(x)| \le \left( \max_{y\in \mathbb F_q^d} |\widehat{K}(y)| \right) \frac{1}{q^d} \sum_{y\in \mathbb F_q^d} |E(x-y)| \sim \frac{|E|}{q^{d-1}},$$
and we obtain the inequality \eqref{eq2.4}. Next, in order to prove the inequality \eqref{eq2.5} holds, it will be enough to show that
\begin{equation}\label{eq2.6} \|E\ast \widehat{K}\|_{L^2(C, \sigma)}^2 \lesssim \min\left\{ q^{-2d+3} |E|, ~ q^{-2d+2} |E| + q^{\frac{-5d+6}{2}}
|E|^2 \right\} .\end{equation}
Since $ E\ast \widehat{K} = \widehat{E^\vee K},$ we see from Lemma \ref{Lem2.2} that
$$ \|E\ast \widehat{K}\|_{L^2(C, \sigma)}^2 =\|\widehat{ E^\vee K}\|_{L^2(C, \sigma)}^2 \le q  \| E^\vee K\|_{L^2(\mathbb F_q^d, dm)}^2.$$
By the definition of $K$, the right-hand side is written by
$$ q \sum_{m\ne (0,\ldots,0)} \left|E^\vee(m)~ \sigma^\vee(m)\right|^2.$$
Applying Lemma \ref{Biglem} to this estimate, we obtain the inequality \eqref{eq2.6}. Thus, we complete the proof of the inequality \eqref{eq2.5}.
\end{proof}

The following result is much weaker than \eqref{eq2.5} of Theorem \ref{Lem2.5}, but it is useful to apply in practice.
\begin{corollary} \label{cor2.6} If $d\ge 4$ is even, then we have
$$ \|E\ast \widehat{K}\|_{L^2(C, \sigma)}
\lesssim q^{-d+1} |E|^{\frac{d+2}{2d}}$$
for all $E\subset (\mathbb F_q^d, dx).$
\end{corollary}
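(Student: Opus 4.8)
The plan is to deduce Corollary \ref{cor2.6} directly from the two competing bounds furnished by the estimate \eqref{eq2.5} in Lemma \ref{Lem2.5}, splitting the argument according to the size of $|E|$. Writing the target as $q^{-d+1}|E|^{\frac{d+2}{2d}}$ and noting that $\frac{d+2}{2d}=\frac12+\frac1d>\frac12$, I expect the natural threshold separating the two regimes to be $|E|=q^{d/2}$, which is precisely the cardinality of a maximal subspace contained in the cone in the relevant even dimensions; this is the crossover at which the two bounds inside the minimum in \eqref{eq2.5} exchange dominance. We may assume $|E|\ge 1$, since the inequality is trivial when $E=\emptyset$.

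First, in the regime $|E|\ge q^{d/2}$ I would use the first term of the minimum in \eqref{eq2.5}, namely $q^{\frac{-2d+3}{2}}|E|^{\frac12}=q^{-d+\frac32}|E|^{\frac12}$. Comparing powers, the desired inequality $q^{-d+\frac32}|E|^{\frac12}\lesssim q^{-d+1}|E|^{\frac{d+2}{2d}}$ is equivalent to $q^{\frac12}\le |E|^{\frac1d}$, that is, to $|E|\ge q^{d/2}$, so this regime is settled at once.

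Second, in the complementary regime $1\le |E|\le q^{d/2}$ I would invoke the second term of the minimum in \eqref{eq2.5}, namely $q^{-d+1}|E|^{\frac12}+q^{\frac{-5d+6}{4}}|E|$, and bound each summand by the target separately. The first summand shares the prefactor $q^{-d+1}$ with the target, and since $\frac{d+2}{2d}\ge\frac12$ and $|E|\ge 1$ we have $|E|^{\frac12}\le |E|^{\frac{d+2}{2d}}$, so this summand is always at most the target. For the second summand, the comparison $q^{\frac{-5d+6}{4}}|E|\lesssim q^{-d+1}|E|^{\frac{d+2}{2d}}$ reduces, after collecting the exponents of $q$ and of $|E|$, to $|E|^{\frac{d-2}{2d}}\le q^{\frac{d-2}{4}}$, i.e. to $|E|\le q^{d/2}$, which holds by assumption. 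Combining the two cases yields the stated bound.

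I do not anticipate a genuine obstacle here: the content of the corollary is merely that the sharp but awkward two-term estimate \eqref{eq2.5} can be repackaged as a single clean power of $|E|$, and the only real decision is locating the crossover point $|E|=q^{d/2}$. Everything else is an elementary comparison of powers of $q$ and of $|E|$, where the hypothesis $d\ge 4$ (so that $d-2>0$) is used to justify dividing exponents by $d-2$ in the final step.
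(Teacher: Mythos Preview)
Your proof is correct and follows essentially the same approach as the paper: both deduce the corollary from \eqref{eq2.5} by splitting according to the size of $|E|$ at the threshold $|E|=q^{d/2}$ and comparing powers. The only cosmetic difference is that the paper further subdivides the range $|E|\le q^{d/2}$ at $|E|=q^{(d-2)/2}$ (recording a three-case estimate \eqref{eq2.7}), whereas you handle both summands of the second term in \eqref{eq2.5} simultaneously; this amounts to the same computation.
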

\begin{proof}
Notice that the estimate \eqref{eq2.5} of Lemma \ref{Lem2.5} implies that
 if $d\ge 4$ is even, the estimate
\begin{equation}\label{eq2.7}
\|E\ast \widehat{K}\|_{L^2(C, \sigma)} \lesssim
\left\{ \begin{array}{ll} q^{\frac{-2d+3}{2}} |E|^{\frac{1}{2}}\quad\mbox{if}~~ q^{\frac{d}{2}} \le |E| \le q^d\\
   q^{\frac{-5d+6}{4}} |E| \quad\mbox{if}~~ q^{\frac{d-2}{2}} \le |E| \le q^{\frac{d}{2}}\\
   q^{-d+1} |E|^{\frac{1}{2}} \quad\mbox{if}~~  1\le |E| \le q^{\frac{d-2}{2}} \end{array}\right.
\end{equation}
holds for all $E\subset (\mathbb F_q^d, dx),$ which in turn implies the conclusion of the corollary.
\end{proof}

  The following result will be used to deduce the estimate \eqref{eq1.3} of Theorem \ref{main}.

\begin{lemma} \label{Lem2.7} If the dimension $d\ge 6$ is even, the estimate
$$\|E\ast \widehat{K}\|_{L^{\frac{d-2}{2}}(C, \sigma)} \lesssim q^{-d+1}|E|^{\frac{d-2}{d}}$$
holds for all $E\subset (\mathbb F_q^d, dx).$
\end{lemma}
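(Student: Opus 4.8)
The plan is to obtain the $L^{\frac{d-2}{2}}$ bound by interpolating the two endpoint estimates already available for the operator $E \mapsto E \ast \widehat{K}$ on the measure space $(C, \sigma)$: namely the $L^\infty$ estimate \eqref{eq2.4} of Lemma \ref{Lem2.5} and the $L^2$ estimate of Corollary \ref{cor2.6}. The crucial observation that makes this clean is that both endpoint bounds carry the \emph{same} power $q^{-d+1}$ of $q$: we have $\|E\ast\widehat K\|_{L^\infty(C,\sigma)} \lesssim q^{-d+1}|E|$ and $\|E\ast\widehat K\|_{L^2(C,\sigma)} \lesssim q^{-d+1}|E|^{\frac{d+2}{2d}}$. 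Hence the interpolated bound will automatically inherit the factor $q^{-d+1}$, and the only thing left to track is the exponent of $|E|$.

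First I would note that for even $d\ge 6$ the target exponent $r := \frac{d-2}{2}$ satisfies $r \ge 2$, so that $L^{\frac{d-2}{2}}(C,\sigma)$ sits between $L^2(C,\sigma)$ and $L^\infty(C,\sigma)$. Writing $\frac{1}{r} = \frac{1-\theta}{2}$ forces $\theta = \frac{d-6}{d-2}$ and $1-\theta = \frac{4}{d-2}$; note $\theta\ge 0$ precisely because $d\ge 6$, and $\theta=0$ recovers $d=6$, in which case the lemma is literally Corollary \ref{cor2.6}. Then I would invoke the elementary interpolation inequality
$$\|g\|_{L^r(C,\sigma)} \le \|g\|_{L^\infty(C,\sigma)}^{\theta}\,\|g\|_{L^2(C,\sigma)}^{1-\theta},$$
which follows from $\int_C |g|^r\,d\sigma \le \|g\|_{L^\infty(C,\sigma)}^{r-2}\int_C |g|^2\,d\sigma$ together with the identities $\frac{r-2}{r}=\theta$ and $\frac{2}{r}=1-\theta$, applied to $g = E\ast\widehat K$.

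Substituting the two endpoint estimates then gives
$$\|E\ast\widehat K\|_{L^{\frac{d-2}{2}}(C,\sigma)} \lesssim \left(q^{-d+1}|E|\right)^{\theta}\left(q^{-d+1}|E|^{\frac{d+2}{2d}}\right)^{1-\theta} = q^{-d+1}\,|E|^{\,\theta + (1-\theta)\frac{d+2}{2d}},$$
and the remaining step is the purely arithmetic verification that the exponent of $|E|$ collapses to the desired value. With $\theta=\frac{d-6}{d-2}$ and $1-\theta=\frac{4}{d-2}$ one computes $\theta + (1-\theta)\frac{d+2}{2d} = \frac{d^2-4d+4}{d(d-2)} = \frac{(d-2)^2}{d(d-2)} = \frac{d-2}{d}$, which is exactly the claimed power; this would complete the argument.

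I do not expect any serious obstacle here: the proof is a one-step interpolation, and the only real content is that the two endpoint estimates share the same $q$-power, so that the $|E|$-exponents combine to the clean value $\frac{d-2}{d}$. If one preferred to avoid Corollary \ref{cor2.6} and instead interpolate against the sharper, range-dependent $L^2$ bound \eqref{eq2.7} of Lemma \ref{Lem2.5}, the same conclusion would follow, but at the cost of splitting into the three ranges $1\le |E|\le q^{\frac{d-2}{2}}$, $q^{\frac{d-2}{2}}\le |E|\le q^{\frac{d}{2}}$, and $q^{\frac{d}{2}}\le |E|\le q^d$ and checking the target bound on each; the route through Corollary \ref{cor2.6} is preferable precisely because it dispenses with this case analysis.
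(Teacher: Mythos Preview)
Your proof is correct and follows essentially the same approach as the paper: the paper's proof is a one-line remark that the statement follows by interpolating \eqref{eq2.4} of Lemma~\ref{Lem2.5} with Corollary~\ref{cor2.6}, and you have simply supplied the explicit value of $\theta$ and verified the arithmetic on the $|E|$-exponent. Your additional commentary on the alternative route via the range-dependent bound \eqref{eq2.7} is accurate but not needed.
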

\begin{proof}
Since $2\le \frac{d-2}{2} <\infty$ for $d\ge 6$, the statement follows immediately by  interpolating the estimate \eqref{eq2.4} of Lemma \ref{Lem2.5} and the conclusion of Corollary \ref{cor2.6}.
\end{proof}

\subsection{Decomposition of the dual restricted averaging operator}
We shall decompose the dual operator $A_C^*$ defined as in \eqref{dualoper}.
We define a function $M$ on $(\mathbb F_q^d, dm)$ by
\begin{equation}\label{defM} M(m)=C^\vee(m)-\frac{|C|}{q^d} \delta_0(m)\quad\mbox{for}~~m\in (\mathbb F_q^d, dm).\end{equation}
Then for each $x\in (\mathbb F_q^d, dx)$ we can write
\begin{equation}\label{fC} C(x)=\widehat{M}(x) +\frac{|C|}{q^d} \widehat{\delta_0}(x)= \widehat{M}(x)+ \frac{|C|}{q^d}.\end{equation}
Namely, the characteristic function on the cone $C$ is same as the function $\widehat{M} + \frac{|C|}{q^d}.$
Recall from \eqref{dualoper} that we can write
$$ A_C^*h= \frac{q^{2d}}{|C|^2} h\ast C$$
where $h$ is a function supported on $C.$ Thus, $A_C^*$ can be decomposed as
$$ A_C^*h= \frac{q^{2d}}{|C|^2} h\ast \widehat{M} + \frac{q^{d}}{|C|} h\ast 1.$$

The following lemma plays a crucial role in proving the inequality \eqref{eq1.4} of Theorem \ref{main}.
\begin{lemma}\label{estimateM}
Let $M$ be the function defined as in \eqref{defM}. If the dimension, $d\ge 4,$ is even, then the estimates
\begin{equation}\label{M1}
\|F\ast \widehat{M}\|_{L^\infty(\mathbb F_q^d, dx)} \lesssim \frac{|F|}{q^d}
\end{equation}
and
\begin{equation}\label{M2}
\|F\ast \widehat{M}\|_{L^2(\mathbb F_q^d, dx)} \lesssim
\min\left\{q^{-d}|E|^{\frac{1}{2}},~ q^{\frac{-2d-1}{2}} |E|^{\frac{1}{2}}+q^{\frac{-5d}{4}} |E|\right\}
\end{equation}
hold for all $ F \subset (C, \sigma).$
\end{lemma}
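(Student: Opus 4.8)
The plan is to treat this as the dual counterpart of Lemma \ref{Lem2.5}, obtained by replacing the surface measure $\sigma$ with the characteristic function $C$ and by replacing the restriction estimate of Lemma \ref{Lem2.2} with a direct appeal to the Plancherel theorem. The structural identity underlying everything is $\widehat{M} = C - \frac{|C|}{q^d}$, which is immediate from \eqref{fC}. For the $L^\infty$ bound \eqref{M1}, I would first observe that since $C(y)\in\{0,1\}$ and $\frac{|C|}{q^d}\sim q^{-1}$, one has $\max_{y}|\widehat{M}(y)| = \max_y\left|C(y) - \tfrac{|C|}{q^d}\right| \le 1$. Then, exactly as in the proof of \eqref{eq2.4}, for any $x\in\mathbb F_q^d$,
\[ |F\ast\widehat{M}(x)| \le \Big(\max_y|\widehat{M}(y)|\Big)\,\frac{1}{q^d}\sum_{y\in\mathbb F_q^d} F(x-y) \lesssim \frac{|F|}{q^d}, \]
which gives \eqref{M1}.

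For the $L^2$ bound \eqref{M2}, the first step is the Fourier-analytic identity $F\ast\widehat{M} = \widehat{F^\vee M}$, which follows from the convolution rule $(F\ast\widehat{M})^\vee = F^\vee\,(\widehat{M})^\vee = F^\vee M$ together with Fourier inversion. The crucial difference from Lemma \ref{Lem2.5} is that the target norm here is taken over all of $(\mathbb F_q^d, dx)$ rather than over $(C,\sigma)$, so I can invoke the Plancherel theorem directly, with no loss of a factor $q^{1/2}$. This yields
\[ \|F\ast\widehat{M}\|_{L^2(\mathbb F_q^d, dx)}^2 = \|F^\vee M\|_{L^2(\mathbb F_q^d, dm)}^2 = \sum_{m\ne (0,\ldots,0)} |F^\vee(m)|^2\,|C^\vee(m)|^2, \]
where I used that $M(m)=C^\vee(m)$ for $m\ne(0,\ldots,0)$ and $M(0,\ldots,0)=0$.

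Finally, I would convert $C^\vee$ back to $\sigma^\vee$ so as to reuse Lemma \ref{Biglem}. Since $\sigma = \frac{q^d}{|C|}C$ gives $C^\vee = \frac{|C|}{q^d}\sigma^\vee$ with $\frac{|C|}{q^d}\sim q^{-1}$, the sum above equals $\big(\tfrac{|C|}{q^d}\big)^2\sum_{m\ne(0,\ldots,0)}|F^\vee(m)|^2|\sigma^\vee(m)|^2 \sim q^{-2}\sum_{m\ne(0,\ldots,0)}|F^\vee(m)|^2|\sigma^\vee(m)|^2$. Applying Lemma \ref{Biglem} with $E=F$ then produces
\[ \|F\ast\widehat{M}\|_{L^2(\mathbb F_q^d, dx)}^2 \lesssim \min\left\{ q^{-2d}|F|,\ q^{-2d-1}|F| + q^{\frac{-5d}{2}}|F|^2\right\}, \]
and taking square roots (using $\sqrt{a+b}\lesssim\sqrt a+\sqrt b$) gives exactly \eqref{M2} (the $|E|$ in the statement should read $|F|$). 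The argument is essentially bookkeeping, and the only point requiring genuine care is that bookkeeping itself: tracking the two powers of $\frac{|C|}{q^d}$ correctly, and recognizing that Plancherel—rather than the restriction estimate of Lemma \ref{Lem2.2}—is the right tool here, which is precisely what accounts for the shift from the exponents of Lemma \ref{Lem2.5} to the sharper exponents appearing in \eqref{M2}.
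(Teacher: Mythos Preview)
Your proof is correct and follows essentially the same route as the paper's own argument: the $L^\infty$ bound via $\|\widehat{M}\|_\infty\lesssim 1$ together with the trivial $L^1$ bound on $F$, and the $L^2$ bound via Plancherel, the identity $M(m)=C^\vee(m)=\tfrac{|C|}{q^d}\sigma^\vee(m)$ for $m\ne 0$, and an application of Lemma~\ref{Biglem}. Your observation that $|E|$ in the statement should read $|F|$ is also correct.
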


\begin{proof}
To prove the inequality \eqref{M1}, we notice from Young's inequality for convolutions that
$$ \|F\ast \widehat{M}\|_{L^\infty(\mathbb F_q^d, dx)} \le \|F\|_{L^1(\mathbb F_q^d, dx)}~\| \widehat{M}\|_{L^\infty(\mathbb F_q^d, dx)}= \frac{|F|}{q^d}\,\| \widehat{M}\|_{L^\infty(\mathbb F_q^d, dx)}.$$
Since $|C|\sim q^{d-1}$, it is clear from \eqref{fC} that $\| \widehat{M}\|_{L^\infty(\mathbb F_q^d, dx)} \lesssim 1.$ Thus, the inequality \eqref{M1} holds. Next, we shall prove the inequality \eqref{M2}. Squaring the both sides of the inequality \eqref{M2}, it suffices to show that
\begin{equation} \label{eq2.12}
\|F\ast \widehat{M}\|^2_{L^2(\mathbb F_q^d, dx)} \lesssim
\min\left\{q^{-2d}|E|,~ q^{-2d-1} |E|+q^{\frac{-5d}{2}} |E|^2\right\}\quad\mbox{for all}~~F\subset C.
\end{equation}
By the Plancherel theorem, it follows that
$$ \|F\ast \widehat{M}\|^2_{L^2(\mathbb F_q^d, dx)}
=\|F^\vee M\|^2_{L^2(\mathbb F_q^d, dm)} =\sum_{m\in \mathbb F_q^d} |F^\vee(m)~ M(m) |^2.$$
By the definition of $M$  in \eqref{defM}, it is clear that  $M(m)=C^\vee(m)$ for $m\ne (0,\ldots,0)$ and $M(0,\ldots,0)=0.$ Also recall that the normalized surface measure $\sigma$ on the cone $C$ can be identified with a function $\sigma(x)=\frac{q^d}{|C|} C(x) \sim q C(x).$ It follows that
$$\|F\ast \widehat{M}\|^2_{L^2(\mathbb F_q^d, dx)}
=\sum_{m\ne (0,\ldots,0)} |F^\vee(m)~ C^\vee(m) |^2 \sim q^{-2}\sum_{m\ne (0,\ldots,0)} |F^\vee(m) \sigma^\vee(m)|^2.$$
Then the estimate \eqref{eq2.12} is obtained by using Lemma \ref{Biglem}.
Thus, the proof is complete.
\end{proof}

By a direct computation, the following result is obtained from \eqref{M2} of Lemma \ref{estimateM}.
\begin{corollary}\label{Cor2.9} Let $d\ge 4$ be even. Then the estimate
\begin{equation}\label{eq2.13}
\|F\ast \widehat{M}\|_{L^2(\mathbb F_q^d, dx)} \lesssim
\left\{ \begin{array}{ll} q^{-d} |F|^{\frac{1}{2}}\quad\mbox{if}~~ q^{\frac{d}{2}} \le |F| \lesssim q^{d-1}\\
   q^{\frac{-5d}{4}} |F| \quad\mbox{if}~~ q^{\frac{d-2}{2}} \le |F| \le q^{\frac{d}{2}}\\
   q^{\frac{-2d-1}{2}} |F|^{\frac{1}{2}} \quad\mbox{if}~~  1\le |F| \le q^{\frac{d-2}{2}} \end{array}\right.
\end{equation}
holds for all $F\subset (C, \sigma).$
\end{corollary}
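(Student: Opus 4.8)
The plan is to unwind the minimum appearing in estimate \eqref{M2} of Lemma \ref{estimateM} into the three explicit regimes stated in the corollary. Writing $A:=q^{-d}|F|^{\frac{1}{2}}$ and $B:= q^{\frac{-2d-1}{2}}|F|^{\frac{1}{2}}+q^{\frac{-5d}{4}}|F|$ for the two competing bounds, Lemma \ref{estimateM} gives $\|F\ast\widehat{M}\|_{L^2(\mathbb F_q^d,dx)}\lesssim \min\{A,B\}$, so it suffices to determine, in each of the three ranges of $|F|$, which of $A$, $B$, and the two summands of $B$ is dominant. Throughout, $F\subset C$ forces $|F|\lesssim |C|\sim q^{d-1}$, which is the upper endpoint of the top range.

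First I would dispose of the top range $q^{\frac{d}{2}}\le |F|\lesssim q^{d-1}$, where the desired bound is simply $A$. Since $\min\{A,B\}\le A$, this is immediate and no comparison is needed. The genuine work is controlling $B$ in the two lower ranges, and the key observation is that its two summands cross over exactly at $|F|=q^{\frac{d-2}{2}}$: a direct computation shows that $q^{\frac{-5d}{4}}|F|\ge q^{\frac{-2d-1}{2}}|F|^{\frac{1}{2}}$ is equivalent to $q^{\frac{-d+2}{4}}|F|^{\frac{1}{2}}\ge 1$, that is, to $|F|\ge q^{\frac{d-2}{2}}$. Consequently, for $q^{\frac{d-2}{2}}\le |F|\le q^{\frac{d}{2}}$ the second summand dominates, so $\min\{A,B\}\le B\lesssim q^{\frac{-5d}{4}}|F|$, which is the middle regime; and for $1\le |F|\le q^{\frac{d-2}{2}}$ the first summand dominates, so $\min\{A,B\}\le B\lesssim q^{\frac{-2d-1}{2}}|F|^{\frac{1}{2}}$, which is the bottom regime.

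Finally I would verify that the three pieces glue consistently. At $|F|=q^{\frac{d}{2}}$ both $A$ and $q^{\frac{-5d}{4}}|F|$ equal $q^{-3d/4}$, and at $|F|=q^{\frac{d-2}{2}}$ both lower bounds equal $q^{\frac{-3d-4}{4}}$, so the exponent of the piecewise bound is continuous across the thresholds. Moreover, the stated term really is the minimizer in each range: within the region where $B\sim q^{\frac{-5d}{4}}|F|$ one checks $A\le B$ precisely when $|F|\ge q^{\frac{d}{2}}$, and the inequality $q^{\frac{-2d-1}{2}}\le q^{-d}$ shows that the first summand of $B$ is always $\lesssim A$, so in the bottom range $B$ indeed governs. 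Since every step is an elementary comparison of powers of $q$, I expect no real obstacle; the only thing to get right is locating the two crossover thresholds $q^{\frac{d-2}{2}}$ and $q^{\frac{d}{2}}$, which is exactly what makes the three-case split of the corollary the natural one.
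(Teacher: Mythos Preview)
Your proposal is correct and takes essentially the same approach as the paper: the paper's proof is the single sentence ``by a direct computation, the following result is obtained from \eqref{M2} of Lemma~\ref{estimateM},'' and your argument is precisely that computation, unwinding $\min\{A,B\}$ by locating the crossover thresholds $|F|=q^{\frac{d-2}{2}}$ and $|F|=q^{\frac{d}{2}}$. The continuity and minimizer checks in your final paragraph are more than is strictly required for the stated $\lesssim$ bound, but they do no harm.
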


We shall need the following estimates.

\begin{lemma} \label{Lem2.10} If $d\ge 6$ is even, then the following estimate holds for all
$F\subset (C, \sigma)$:
\begin{equation}\label{eq2.14}
\|F\ast \widehat{M}\|_{L^{\frac{d^2-2d+2}{2d}}(\mathbb F_q^d, dx)} \lesssim
\frac{|F|^{\frac{d^2-4d+6}{d^2-2d+2}}} {q^{\frac{d^3-2d^2+4d}{d^2-2d+2}}}.
\end{equation}

On the other hand, in the dimension four, the estimate
\begin{equation}\label{eq2.15}\|F\ast \widehat{M}\|_{L^{\frac{10}{3}}(\mathbb F_q^4, dx)} \lesssim
\left\{ \begin{array}{ll} q^{-4} |F|^{\frac{7}{10}}\quad\mbox{if}~~ q^{2} \le |F| \lesssim q^{3}\\
   q^{-\frac{23}{5}} |F| \quad\mbox{if}~~ q \le |F| \le q^2\\
   q^{-\frac{43}{10}} |F|^{\frac{7}{10}} \quad\mbox{if}~~  1\le |F| \le q \end{array}\right.
\end{equation}
holds for all $F\subset (C, \sigma).$
\end{lemma}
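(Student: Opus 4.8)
The plan is to obtain both \eqref{eq2.14} and \eqref{eq2.15} by interpolating the pointwise bound \eqref{M1} against the piecewise $L^2$ estimate of Corollary \ref{Cor2.9}, and then to repackage the resulting three-branch bound into the stated form. Write $g=F\ast\widehat{M}$ and set $p=\frac{d^2-2d+2}{2d}$. First I would record the elementary but decisive observation that $p\ge 2$ exactly when $d^2-6d+2\ge 0$, i.e.\ precisely for even $d\ge 6$; this is what confines the clean estimate \eqref{eq2.14} to dimensions $d\ge 6$ and forces dimension four to be handled separately. Since $2\le p<\infty$, logarithmic convexity of the $L^p(\mathbb F_q^d,dx)$ norms gives
$$\|g\|_{L^p}\le \|g\|_{L^2}^{2/p}\,\|g\|_{L^\infty}^{1-2/p},$$
and I would substitute $\|g\|_{L^\infty}\lesssim |F|/q^d$ from \eqref{M1} together with each of the three branches of \eqref{eq2.13}. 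This yields a bound for $\|g\|_{L^p}$ that is a single monomial in $|F|$ and $q$ on each of the ranges $q^{d/2}\le |F|\lesssim q^{d-1}$, $q^{(d-2)/2}\le|F|\le q^{d/2}$, and $1\le |F|\le q^{(d-2)/2}$.

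It then remains to show that the single monomial on the right of \eqref{eq2.14} dominates this three-branch bound throughout $1\le|F|\lesssim q^{d-1}$. For fixed $q$ each branch, as well as the claimed bound, is a monomial in $|F|$, hence affine in $\log|F|$, so the comparison reduces to matching $q$-powers and $|F|$-slopes branch by branch. A direct exponent computation that I expect to carry out shows three things: on the lowest range the interpolant shares the $q$-exponent $\frac{d^3-2d^2+4d}{d^2-2d+2}$ with the claimed bound but has the strictly smaller $|F|$-exponent $\frac{d^2-4d+2}{d^2-2d+2}$, so the claimed monomial dominates it for every $|F|\ge 1$; the claimed bound coincides with both the top and the middle branch at the single point $|F|=q^{d/2}$; and its $|F|$-slope $\frac{d^2-4d+6}{d^2-2d+2}$ lies strictly between the slope $\frac{d^2-4d+2}{d^2-2d+2}$ of the top branch and the slope $1$ of the middle branch. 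Being steeper than the top branch forces the claimed bound above it for $|F|\ge q^{d/2}$, and being flatter than the middle branch forces it above that branch for $|F|\le q^{d/2}$. Together these cover all three ranges and give \eqref{eq2.14}.

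For $d=4$ the exponent $\frac{d^2-2d+2}{2d}=\frac54$ is below $2$, so the interpolation above cannot reach it; instead I would target $L^{10/3}$, for which $\tfrac{10}{3}\ge 2$ makes the logarithmic-convexity interpolation legitimate with $2/p=\tfrac35$. Interpolating \eqref{M1} against the three branches of \eqref{eq2.13} with this value produces exactly the three lines of \eqref{eq2.15}. Here I would deliberately not collapse the branches into one monomial: the mechanism that makes the collapse work for $d\ge 6$ relies on $p\ge 2$ and fails at $d=4$, which is why \eqref{eq2.15} is left in piecewise form.

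The only genuinely delicate part is the exponent bookkeeping in the second paragraph: one must verify that the rational exponents $\frac{d^2-4d+6}{d^2-2d+2}$ and $\frac{d^3-2d^2+4d}{d^2-2d+2}$ appearing in \eqref{eq2.14} are exactly the log-linear interpolant of the branch bounds pinned at $|F|=1$ and $|F|=q^{d/2}$, and that this interpolant lies above all three branches over their respective ranges. No analytic input beyond \eqref{M1} and Corollary \ref{Cor2.9} enters; once the threshold $d\ge 6$ (equivalently $p\ge 2$) is isolated, the remainder is a routine, if somewhat lengthy, comparison of rational exponents.
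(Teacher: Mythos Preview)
Your proposal is correct and follows essentially the same approach as the paper: both arguments interpolate the $L^\infty$ bound \eqref{M1} against the $L^2$ information of Corollary \ref{Cor2.9}, with the only difference being that the paper first collapses the three $L^2$ branches into the single bound $\|F\ast\widehat{M}\|_{L^2}\lesssim q^{-(2d+1)/2}|F|^{(d+2)/(2d)}$ (valid for all even $d\ge 4$) and then interpolates, whereas you interpolate first and collapse afterward. For $d=4$ your argument and the paper's coincide exactly.
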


\begin{proof}
First, let us prove the estimate \eqref{eq2.14}.
By a direct comparison, we see that the estimate \eqref{eq2.13} of Corollary \ref{Cor2.9} implies that if $d\ge 4$ is even, then
\begin{equation}\label{eq2.16}
\|F\ast \widehat{M}\|_{L^2(\mathbb F_q^d, dx)}
\lesssim q^{\frac{-2d-1}{2}} |F|^{\frac{d+2}{2d}} \quad \mbox{for all}~~ F\subset (C, \sigma).
\end{equation}
Recall from \eqref{M1} of Lemma \ref{estimateM} that if $d\ge 4$ is even, then
\begin{equation}\label{eq2.17}
\|F\ast \widehat{M}\|_{L^\infty(\mathbb F_q^d, dx)} \lesssim q^{-d}|F|
\quad \mbox{for all}~~ F\subset (C, \sigma).
\end{equation}
Since $2< \frac{d^2-2d+2}{2d} <\infty$ for $d\ge 6$, the estimate \eqref{eq2.14} follows by interpolating \eqref{eq2.16} and \eqref{eq2.17}.\\

Next, to prove the estimate \eqref{eq2.15}, notice that if $d=4,$ then \eqref{M1} of Lemma \ref{estimateM} and \eqref{eq2.13} of Corollary \ref{Cor2.9} yield the following two estimates:
for every $F\subset (C,\sigma)$
$$ \|F\ast \widehat{M}\|_{L^\infty(\mathbb F_q^4, dx)} \lesssim q^{-4}|F| $$
and
\begin{equation}\label{L2fine} \|F\ast \widehat{M}\|_{L^2(\mathbb F_q^4, dx)} \lesssim
\left\{ \begin{array}{ll} q^{-4} |F|^{\frac{1}{2}}\quad\mbox{if}~~ q^{2} \le |F| \lesssim q^{3}\\
   q^{-5} |F| \quad\mbox{if}~~ q \le |F| \le q^{2}\\
   q^{-\frac{9}{2}} |F|^{\frac{1}{2}} \quad\mbox{if}~~  1\le |F| \le q. \end{array}\right. \end{equation}
 Since $2< \frac{10}{3} <\infty,$  the estimate  \eqref{eq2.15} of Lemma \ref{Lem2.10} follows by interpolating the above two estimates.
\end{proof}

\section{The proof of the inequality \eqref{eq1.3} in Theorem \ref{main}}\label{sec3}
In this section, we restate and prove the first part of Theorem \ref{main}.
\begin{theorem}\label{mainagain1} Let $A_C$ be the restricted averaging operator associated with the cone $C\subset (\mathbb F_q^d, dx)$ defined as in \eqref{defcone}.
Suppose that $\sigma$ denotes the normalized surface measure on the cone $C.$ Then, if $d\ge 6$ is even,  the estimate
$$
\|A_Cf\|_{L^{d-2}(C, \sigma)} \lesssim \|f\|_{L^{ \frac{d}{d-1}}(\mathbb F_q^d, dx)}
$$
holds for all functions $f$ on $\mathbb F_q^d.$
\end{theorem}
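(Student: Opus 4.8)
The plan is to use the decomposition \eqref{decK}, writing $A_Cf = f\ast 1 + f\ast\widehat K$, and to treat the two pieces separately; we may assume $f\ge 0$. The first piece is the constant $f\ast 1 = \int_{\mathbb F_q^d} f\,dx$, so
\[
\|f\ast 1\|_{L^{d-2}(C,\sigma)} = \int_{\mathbb F_q^d} f\,dx = \|f\|_{L^1(\mathbb F_q^d,dx)} \le \|f\|_{L^{d/(d-1)}(\mathbb F_q^d,dx)},
\]
the last step being the nesting of norms for the probability measure $dx$ (here $d/(d-1)>1$). Thus it suffices to bound $Tf := f\ast\widehat K$ from $L^{d/(d-1)}$ to $L^{d-2}(C,\sigma)$.

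Note that $P_1 = \left(\frac{d-1}{d},\frac1{d-2}\right)$ is a vertex of the sharp region in Lemma \ref{LemN}, so the bound is a genuine endpoint that cannot be produced by interpolating estimates at neighbouring admissible points; the new content relative to \cite{KY15} is exactly the passage from characteristic functions to general $f$. First I would record the restricted strong type bound at $P_1$: interpolating the $L^\infty$ estimate \eqref{eq2.4} with the $L^{(d-2)/2}$ estimate of Lemma \ref{Lem2.7} at $\theta=\tfrac12$ yields, for every $E\subset\mathbb F_q^d$,
\[
\|T\chi_E\|_{L^{d-2}(C,\sigma)} \lesssim q^{-(d-1)}\,|E|^{\frac{d-1}{d}} = \|\chi_E\|_{L^{d/(d-1)}(\mathbb F_q^d,dx)}.
\]
To promote this to arbitrary $f\ge 0$, I would decompose $f\approx\sum_k 2^k\chi_{E_k}$ into dyadic level sets $E_k=\{x:2^k\le f(x)<2^{k+1}\}$ and sum by the triangle inequality in $L^{d-2}(C,\sigma)$.

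The difficulty is that the critical power $|E|^{(d-1)/d}$ above makes the dyadic sum diverge by a logarithmic factor. The key device is to use instead the sharper, size-sensitive estimate \eqref{eq2.5} (equivalently its three regimes in \eqref{eq2.7}): interpolating these with \eqref{eq2.4} gives, for small sets $|E|\le q^{(d-2)/2}$, the strictly subcritical bound $\|T\chi_E\|_{L^{d-2}(C,\sigma)}\lesssim q^{-(d-1)}|E|^{(d-3)/(d-2)}$ with $\frac{d-3}{d-2}<\frac{d-1}{d}$. Subcriticality collapses the dyadic sum to its largest term, $\sum_k 2^k|E_k|^{(d-3)/(d-2)}\lesssim\max_k\bigl(2^{kd/(d-1)}|E_k|\bigr)^{(d-1)/d}\le q^{d-1}\|f\|_{L^{d/(d-1)}}$, and after the prefactor $q^{-(d-1)}$ this gives the desired bound with no logarithmic loss. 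The genuinely delicate step, which I expect to be the main obstacle, is the contribution of the large level sets $|E_k|\gtrsim q^{(d-2)/2}$: there the interpolated constant degrades and the straightforward summation loses a power of $q$. Closing this gap should require exploiting the two-term structure of the minimum in \eqref{eq2.5} together with the disjointness and the lower size bound of the $E_k$, and it is precisely this step that enforces $d\ge 6$ and leaves $d=4$ (the case noted after \eqref{eq1.3}) open.
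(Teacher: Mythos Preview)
Your setup is right and matches the paper: split $A_Cf=f\ast 1+f\ast\widehat K$, dispose of $f\ast 1$ by nesting, and reduce to bounding $Tf=f\ast\widehat K$. But the argument for $Tf$ is incomplete in exactly the place you flag. The linear approach---triangle inequality over dyadic level sets followed by the size-dependent $L^{d-2}$ bound obtained from \eqref{eq2.7} and \eqref{eq2.4}---is genuinely critical at the scale $|E_k|\sim q^{d/2}$: all three regimes in \eqref{eq2.7} meet there and give only $\|E_k\ast\widehat K\|_{L^{d-2}}\lesssim q^{-(d-1)}|E_k|^{(d-1)/d}$, so the sum $\sum_k 2^{-k}\|E_k\ast\widehat K\|_{L^{d-2}}$ picks up a logarithm that the normalization $\sum_k 2^{-kd/(d-1)}|E_k|=1$ alone cannot absorb. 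Your small-set paragraph is also not quite right as written: the bound $\sum_k 2^k|E_k|^{(d-3)/(d-2)}\lesssim\max_k(2^{kd/(d-1)}|E_k|)^{(d-1)/d}$ is false in general; what is true (and sufficient for that regime) is a H\"older/geometric-series argument using $(d-3)/(d-2)<(d-1)/d$.

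The paper avoids the logarithmic loss by a different, bilinear device which you are missing. One writes
\[
\|f\ast\widehat K\|_{L^{d-2}(C,\sigma)}^2=\|(f\ast\widehat K)(f\ast\widehat K)\|_{L^{(d-2)/2}(C,\sigma)},
\]
expands the product over pairs $(k,j)$ of dyadic indices, and by symmetry restricts to $j\ge k$; then the two factors are treated asymmetrically, the $k$-factor by the $L^\infty$ bound \eqref{eq2.4} and the $j$-factor by Lemma~\ref{Lem2.7} in $L^{(d-2)/2}$. This produces $\sum_{k}\sum_{j\ge k}2^{-k-j}|E_k|\,|E_j|^{(d-2)/d}$, in which the $j$-exponent $(d-2)/d<(d-1)/d$ makes the inner sum geometric, and the outer sum closes exactly to the normalization. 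Note also that the restriction $d\ge 6$ does not arise from any ``large level set'' obstruction as you suggest; it comes simply from the requirement $(d-2)/2\ge 2$ needed to interpolate between $L^2$ and $L^\infty$ in Lemma~\ref{Lem2.7}.
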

\begin{proof}
We aim to show that the estimate
$$ \|f\ast \sigma\|_{L^{d-2}(C, \sigma)} \lesssim \|f\|_{L^{ \frac{d}{d-1}}(\mathbb F_q^d, dx)}=q^{-d+1} \left(\sum_{x\in \mathbb F_q^d} |f(x)|^{\frac{d}{d-1}} \right)^{\frac{d-1}{d}}$$
holds for all functions $f$ on $\mathbb F_q^d.$
Without loss of generality, we may assume that
$f$ is a non-negative real-valued function and
\begin{equation}\label{easy1} \sum_{x\in \mathbb F_q^d}
f(x)^{\frac{d}{d-1}} =1.\end{equation}
Then $\|f\|_\infty \leq 1$ and so we may assume that $f$ is written by a step function
\begin{equation}\label{easy2} f(x)=\sum_{i=0}^\infty 2^{-i} E_i(x),\end{equation}
where $E_i$'s are pairwise disjoint subsets of $\mathbb F_q^d.$ Combining  \eqref{easy1} with \eqref{easy2}, we also assume that
\begin{equation} \label{easy3} \sum_{j=0}^\infty 2^{-\frac{dj}{d-1}}|E_j|=1\quad\mbox{and so} ~~ |E_j|\leq 2^{\frac{dj}{d-1}}~~\mbox{for all}~~j=0,1,\cdots.\end{equation}
Thus, to complete the proof we only need to show that the estimate
$$ \|f\ast \sigma\|_{L^{d-2}(C, \sigma)} \lesssim q^{-d+1} $$
holds for all functions $f$ on $\mathbb F_q^d$ satisfying the assumptions \eqref{easy1},\eqref{easy2}, \eqref{easy3}.
As seen in \eqref{decK}, we can write $f\ast \sigma=f\ast 1 + f\ast \widehat{K},$
and thus our problem is reduced to showing that the following two estimates hold:
\begin{equation}\label{Y1} \|f\ast 1\|_{L^{d-2}(C, \sigma)} \lesssim q^{-d+1}\end{equation}
and
\begin{equation}\label{Y2} \|f\ast \widehat{K}\|_{L^{d-2}(C, \sigma)} \lesssim q^{-d+1},\end{equation}
where the function $K$ on $(\mathbb F_q^d, dm)$ is defined as in \eqref{defK}.  Since $\max\limits_{x\in C} |f\ast 1(x)| \le \|f\|_{L^1(\mathbb F_q^d, dx)},$ the estimate \eqref{Y1} can be obtained by observing
$$ \|f\ast 1\|_{L^{d-2}(C, \sigma)} \le \|f\|_{L^1(\mathbb F_q^d, dx)} \le
\|f\|_{L^{\frac{d}{d-1}}(\mathbb F_q^d, dx)} =q^{-d+1},$$
where we used the assumption \eqref{easy1}.
It remains to prove the estimate \eqref{Y2} which is in turn written by
\begin{equation}\label{YR1}
\mbox{A}:=q^{2d-2}~ \|(f\ast \widehat{K}) (f\ast \widehat{K}) \|_{L^{\frac{d-2}{2}}(C, \sigma)} \lesssim 1.
\end{equation}
Using \eqref{easy2}, we see that
\begin{align*}
\mbox{A} &\le q^{2d-2} \sum_{k=0}^\infty \sum_{j=0}^\infty 2^{-k-j}
\|(E_k\ast \widehat{K}) (E_j\ast \widehat{K}) \|_{L^{\frac{d-2}{2}}(C, \sigma)} \\
&\sim q^{2d-2} \sum_{k=0}^\infty \sum_{j=k}^\infty 2^{-k-j}
\|(E_k\ast \widehat{K}) (E_j\ast \widehat{K}) \|_{L^{\frac{d-2}{2}}(C, \sigma)},
\end{align*}
where the last line is obtained by the symmetry of $k,j.$
Using \eqref{eq2.4} of Lemma \ref{Lem2.5} and Lemma \ref{Lem2.7}, we see that
\begin{align*}\mbox{A}&\lesssim ~ q^{2d-2} \sum_{k=0}^\infty \sum_{j=k}^\infty 2^{-k-j}
\|E_k\ast \widehat{K}\|_{L^\infty(C, \sigma)}~\|E_j\ast \widehat{K} \|_{L^{\frac{d-2}{2}}(C, \sigma)}\\
&\lesssim ~\sum_{k=0}^\infty \sum_{j=k}^\infty 2^{-k-j} |E_k|~|E_j|^{\frac{d-2}{d}}. \end{align*}
By \eqref{easy3}, we conclude that
\begin{align*}\mbox{A}&\lesssim \sum_{k=0}^\infty\sum_{j=k}^\infty 2^{-k-j}|E_k| ~2^{\frac{(d-2)j}{d-1}}=
\sum_{k=0}^\infty 2^{-k} |E_k| \left(\sum_{j=k}^\infty 2^{\frac{-j}{d-1}}\right)\\
& \sim \sum_{k=0}^\infty 2^{-k} |E_k| ~2^{\frac{-k}{d-1}} =\sum_{k=0}^\infty {2^{-\frac{dk}{d-1}}}|E_k|=1.\end{align*}
Thus, we complete the proof.
\end{proof}

\section{The proof of the inequality \eqref{eq1.4} in Theorem \ref{main}}\label{sec4}
We shall provide the complete proof of the second part of Theorem \ref{main} which can be restated as follows.
\begin{theorem}\label{mainagain2} Let $A_C$ be the restricted averaging operator associated with the cone $C\subset (\mathbb F_q^d, dx)$ defined as in \eqref{defcone}.
Suppose that $\sigma$ denotes the normalized surface measure on the cone $C.$ Then, if $d\ge 4$ is even, the estimate
\begin{equation}\label{E2again}
\|A_Cf\|_{L^{\frac{d^2-2d+2}{d-2} }(C, \sigma)} \lesssim \|f\|_{L^{\frac{d^2-2d+2}{d^2-3d+2} }(\mathbb F_q^d, dx)}
\end{equation}
holds for all function $f$ on $\mathbb F_q^d.$ \end{theorem}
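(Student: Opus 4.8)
The plan is to prove \eqref{E2again} in its dual form and then run, on the dual operator $A_C^*$, the same squaring-and-bilinear scheme used in the proof of Theorem \ref{mainagain1}. By duality, \eqref{E2again} is equivalent to $\|A_C^* h\|_{L^{p'}(\mathbb F_q^d, dx)} \lesssim \|h\|_{L^{r'}(C,\sigma)}$ for all $h$ supported on $C$, where $p'=\frac{d^2-2d+2}{d}$ and $r'=\frac{d^2-2d+2}{d^2-3d+4}$ (note $r'>1$ since $d\ge 4$). Using the decomposition $A_C^* h = \frac{q^{2d}}{|C|^2}\,h\ast\widehat M + \frac{q^d}{|C|}\,h\ast 1$ from \eqref{dualoper}, \eqref{defM} and the bound $|C|\sim q^{d-1}$, the constant term contributes $\frac{1}{|C|}\sum_{x\in C}h(x)=\|h\|_{L^1(C,\sigma)}\le\|h\|_{L^{r'}(C,\sigma)}$ by nesting of norms ($\sigma$ is a probability measure), so it suffices to establish the main estimate $q^2\,\|h\ast\widehat M\|_{L^{p'}(\mathbb F_q^d, dx)} \lesssim \|h\|_{L^{r'}(C,\sigma)}$.

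Next I would normalize so that $\sum_{x\in C}h(x)^{r'}=1$, which forces $\|h\|_\infty\le 1$ and $\|h\|_{L^{r'}(C,\sigma)}\sim q^{-(d-1)/r'}$, and write $h=\sum_{i\ge 0}2^{-i}F_i$ with pairwise disjoint $F_i\subset C$ satisfying $\sum_i 2^{-ir'}|F_i|=1$, hence $|F_i|\le 2^{ir'}$. Squaring, $\|h\ast\widehat M\|_{L^{p'}}^2=\|(h\ast\widehat M)(h\ast\widehat M)\|_{L^{p'/2}}$, and I would expand this into $\sum_{j,k}2^{-j-k}(F_j\ast\widehat M)(F_k\ast\widehat M)$, reducing to $j\ge k$ by symmetry. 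For $d\ge 6$ one has $p'/2=\frac{d^2-2d+2}{2d}\ge 2$, so Hölder with exponents $\infty$ and $p'/2$, combined with the $L^\infty$ bound \eqref{M1} and the $L^{p'/2}$ bound \eqref{eq2.14} of Lemma \ref{Lem2.10}, yields $\|(F_j\ast\widehat M)(F_k\ast\widehat M)\|_{L^{p'/2}}\lesssim q^{-d}|F_k|\cdot q^{-\frac{d^3-2d^2+4d}{d^2-2d+2}}|F_j|^{\frac{d^2-4d+6}{d^2-2d+2}}$.

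A direct check shows that, after multiplying by the prefactor $q^2$, all powers of $q$ reassemble exactly to $q^{-(d-1)/r'}$, so the remaining task is to verify $\sum_k\sum_{j\ge k}2^{-j-k}|F_k|\,|F_j|^{\frac{d^2-4d+6}{d^2-2d+2}}\lesssim 1$. Bounding $|F_j|^{\frac{d^2-4d+6}{d^2-2d+2}}\le 2^{j\frac{d^2-4d+6}{d^2-3d+4}}$ via $|F_j|\le 2^{jr'}$, the inner geometric series over $j\ge k$ converges (its ratio exponent is $-\frac{d-2}{d^2-3d+4}<0$ since $d>2$) and sums to $\sim 2^{-k\frac{d-2}{d^2-3d+4}}$; since $1+\frac{d-2}{d^2-3d+4}=r'$, the outer sum collapses to $\sum_k 2^{-kr'}|F_k|=1$, which is precisely the normalization. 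This settles the case $d\ge 6$.

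The case $d=4$ is where I expect the real difficulty. Then $p'/2=\frac54<2$, so the single clean exponent estimate \eqref{eq2.14} is unavailable; instead I would keep the squaring and bilinear expansion but apply Hölder with the conjugate pair $10/3$ and $2$ (using $\frac45=\frac{3}{10}+\frac12$), estimating one factor by the $L^{10/3}$ bound \eqref{eq2.15} and the other by the $L^2$ bound \eqref{L2fine}. The main obstacle is that both \eqref{eq2.15} and \eqref{L2fine} are piecewise in the size of $F$, so the argument must split into several ranges according to where $|F_j|$ and $|F_k|$ lie relative to $q$, $q^2$, and $q^3$; in each range the powers of $q$ and of $|F_j|,|F_k|$ have to be tracked so that they reassemble to the required bound and still sum against the constraint $\sum_i 2^{-ir'}|F_i|=1$. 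This case analysis is the most delicate part of the proof and is the reason the four-dimensional statement is obtained only as a partial improvement.
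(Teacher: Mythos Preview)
Your proposal is correct and follows essentially the same route as the paper: dualize, decompose $A_C^*h$ via $\widehat M$, normalize $h$ as a dyadic step function on $C$, square the $L^{p'}$ norm, and for $d\ge 6$ bound each bilinear piece by $\|F_k\ast\widehat M\|_{L^\infty}\,\|F_j\ast\widehat M\|_{L^{p'/2}}$ using \eqref{M1} and \eqref{eq2.14}, with the geometric series collapsing exactly to the normalization $\sum_k 2^{-kr'}|F_k|=1$. For $d=4$ your plan to use the H\"older pairing $(\tfrac{10}{3},2)$ together with the piecewise bounds \eqref{eq2.15} and \eqref{L2fine}, followed by a case split according to the size of $|F_k|$ and $|F_j|$ relative to $q,q^2,q^3$, is precisely the paper's argument; the case analysis is indeed tedious but each range balances in the expected way.
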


\begin{proof} By duality, it suffices to prove that if $d\ge 4$ is even, then
$$ \|A^*_Ch\|_{L^{\frac{d^2-2d+2}{d}}(\mathbb F_q^d, dx)}
\lesssim \|h\|_{L^{\frac{d^2-2d+2}{d^2-3d+4}}(C, \sigma)}\quad\mbox{for all}~~h:(C,\sigma)\to \mathbb C,$$
where we recall from \eqref{dualoper} that for $x\in (\mathbb F_q^d, dx)$
$$ A^*_Ch(x) =\frac{q^{2d}}{|C|^2} (h\ast C)(x).$$
Put $r=\frac{d^2-2d+2}{d}$ and $p=\frac{d^2-2d+2}{d^2-3d+4}.$ Then our task is to show that the estimate
\begin{equation}\label{eq4.2}
\left\|\frac{q^{2d}}{|C|^2} (h\ast C)\right\|_{L^{r}(\mathbb F_q^d, dx)}
\lesssim \|h\|_{L^{p}(C, \sigma)}
\end{equation}
holds for all $h:(C,\sigma)\to \mathbb C.$
As usual, we may assume that $h$ is a nonnegative real valued function supported on the cone $C.$ By normalization of $h$, we also assume that
\begin{equation}\label{eq4.3} \sum_{x\in C} |h(x)|^p =1. \end{equation}
Furthermore, we may assume that the function $h$ can be written by a step function
\begin{equation}\label{eq4.4}
h(x)= \sum_{j=0}^\infty 2^{-j} F_j(x) \quad \mbox{for}~~x\in C,
\end{equation}
where $F_j$'s are pairwise disjoint subsets of $C.$ From \eqref{eq4.3} and \eqref{eq4.4}, we also assume
\begin{equation}\label{eq4.5} \sum_{k=0}^\infty 2^{-pk} |F_k| =1.
\end{equation}
Hence, it is natural to assume that for every $k=0,1,\ldots,$
\begin{equation}\label{eq4.6}
 |F_k| \le 2^{pk}.
\end{equation}
With the above assumptions on $h$,  our problem is reduced to showing that
if $d\ge 4$ is even, then
$$\left\|\frac{q^{2d}}{|C|^2} (h\ast C)\right\|_{L^{r}(\mathbb F_q^d, dx)}
\lesssim \|h\|_{L^{p}(C, \sigma)}.$$
Now recall from \eqref{defM} and \eqref{fC} that the characteristic function on the cone $C$ is written by
$$C(x)= \widehat{M}(x)+ \frac{|C|}{q^d}\quad \mbox{for}~~x\in (\mathbb F_q^d, dx),$$
where the function $M$ on $(\mathbb F_q^d, dm)$ is defined by
 $M(m)=C^\vee(m)-\frac{|C|}{q^d} \delta_0(m).$
 Then, to complete the proof, it will be enough to show that if $d\ge 4$ is even, then we have
 \begin{equation}\label{FTerm}
 \left\|\frac{q^{d}}{|C|} (h\ast 1)\right\|_{L^{r}(\mathbb F_q^d, dx)}
\lesssim \|h\|_{L^{p}(C, \sigma)}
 \end{equation}
 and
 \begin{equation}\label{STerm}
 \left\|\frac{q^{2d}}{|C|^2} (h\ast \widehat{M})\right\|_{L^{r}(\mathbb F_q^d, dx)}
\lesssim \|h\|_{L^{p}(C, \sigma)}:=|C|^{-\frac{1}{p}} \left(\sum_{x\in C} |h(x)|^p\right)^{\frac{1}{p}},
 \end{equation}
 where $r=\frac{d^2-2d+2}{d},~p=\frac{d^2-2d+2}{d^2-3d+4},$ and the function $h$ satisfies \eqref{eq4.3},\eqref{eq4.4},\eqref{eq4.5},\eqref{eq4.6}.
The estimate \eqref{FTerm} simply follows by using Young's inequality for convolution functions. Indeed, it follows that
\begin{align*}\left\|\frac{q^{d}}{|C|} (h\ast 1)\right\|_{L^{r}(\mathbb F_q^d, dx)}
&\le \frac{q^{d}}{|C|} \|h\|_{L^1(\mathbb F_q^d, dx)} ~\|1\|_{L^{r}(\mathbb F_q^d, dx)} \\
&=\frac{q^d}{|C|} \|h\|_{L^1(\mathbb F_q^d, dx)}
= \|h\|_{L^1(C, \sigma)} \le \|h\|_{L^p(C, \sigma)},\end{align*}
where the last inequality follows because $dx$ is the normalized counting measure and $1<p.$ Thus,  it remains to prove the estimate \eqref{STerm}.
Using \eqref{eq4.3} with the facts that $|S|\sim q^{d-1}$ for $d\ge 4$ and $p= \frac{d^2-2d+2}{d^2-3d+4},$ the estimate \eqref{STerm} can be rewritten by
$$ q^{\frac{d^3-2d^2+3d}{d^2-2d+2}} \|h\ast \widehat{M}\|_{L^r(\mathbb F_q^d, dx)} \lesssim 1.$$
Since  $\|h\ast \widehat{M}\|^2_{L^r(\mathbb F_q^d, dx)}
=\|(h\ast \widehat{M})(h\ast \widehat{M})\|_{L^{\frac{r}{2}}(\mathbb F_q^d, dx)},$ the above estimate is equivalent to the estimate
$$  q^{\frac{2d^3-4d^2+6d}{d^2-2d+2}} \|(h\ast \widehat{M})(h\ast \widehat{M})\|_{L^{\frac{r}{2}}(\mathbb F_q^d, dx)} \lesssim 1,$$ which we must prove. By \eqref{eq4.4} and Minkowski's inequality, the left hand side of the above inequality is dominated by
\begin{align*} & q^{\frac{2d^3-4d^2+6d}{d^2-2d+2}} \sum_{k=0}^\infty \sum_{j=0}^\infty 2^{-k-j} \|(F_k\ast \widehat{M})(F_j\ast \widehat{M})\|_{L^{\frac{r}{2}}(\mathbb F_q^d, dx)}\\
\sim \,& q^{\frac{2d^3-4d^2+6d}{d^2-2d+2}} \sum_{k=0}^\infty \sum_{j=k}^\infty 2^{-k-j} \|(F_k\ast \widehat{M})(F_j\ast \widehat{M})\|_{L^{\frac{r}{2}}(\mathbb F_q^d, dx)}, \end{align*}
where the last line is obtained by the symmetry of $k, j.$
Thus, our final task to complete the proof  is to show that if $d\ge 4$ is even, then we have
\begin{equation}\label{Finalproof}
\mbox{B}:=q^{\frac{2d^3-4d^2+6d}{d^2-2d+2}} \sum_{k=0}^\infty \sum_{j=k}^\infty 2^{-k-j} \|(F_k\ast \widehat{M})(F_j\ast \widehat{M})\|_{L^{\frac{r}{2}}(\mathbb F_q^d, dx)} \lesssim 1,
\end{equation}
where $r=\frac{d^2-2d+2}{d}$ and we assume that \eqref{eq4.5} and \eqref{eq4.6} hold with $p=\frac{d^2-2d+2}{d^2-3d+4}.$
In the following subsections, we shall prove the estimate \eqref{Finalproof}
in the case when  $d\ge 6$ and  $d=4,$ respectively, and so  the proof of Theorem \ref{mainagain2} will be complete.
\subsection{Proof of the estimate \eqref{Finalproof} for even dimensions $d\ge 6$} Assume that $d\ge 6$ is even.
From \eqref{M1} of Lemma \ref{estimateM} and \eqref{eq2.14} of Lemma \ref{Lem2.10}, we see that
\begin{align}\label{eq4.10}\nonumber
\mbox{B} &\le q^{\frac{2d^3-4d^2+6d}{d^2-2d+2}} \sum_{k=0}^\infty \sum_{j=k}^\infty 2^{-k-j} \|F_k\ast \widehat{M}\|_{L^\infty(\mathbb F_q^d, dx)} ~
\|F_j\ast \widehat{M}\|_{L^{\frac{r}{2}}(\mathbb F_q^d, dx)} \\
&\lesssim  \sum_{k=0}^\infty \sum_{j=k}^\infty 2^{-k-j} |F_k|\, |F_j|^{\frac{d^2-4d+6}{d^2-2d+2}}
\end{align}
Since $ p=\frac{d^2-2d+2}{d^2-3d+4},$  it follows from \eqref{eq4.6} and \eqref{eq4.5} that
$$|F_j|\le 2^{\frac{(d^2-2d+2)j}{d^2-3d+4}} ~~ \mbox{for all}~~ j=0,1,\ldots \quad ~~{and}~~ \sum_{k=0}^\infty 2^{-\frac{(d^2-2d+2)k}{d^2-3d+4}} |F_k|
=1.$$
Using these facts, we conclude that
\begin{align*}
\mbox{B} &\lesssim \sum_{k=0}^\infty \sum_{j=k}^\infty 2^{-k-j} |F_k|\, 2^{\frac{(d^2-4d+6)j}{d^2-3d+4}}
=\sum_{k=0}^\infty 2^{-k} |F_k| \left(  \sum_{j=k}^\infty  2^{\frac{(-d+2)j}{d^2-3d+4}}\right)\\
&\sim \sum_{k=0}^\infty 2^{-k} |F_k| \,2^{\frac{(-d+2)k}{d^2-3d+4}}
=\sum_{k=0}^\infty 2^{-\frac{(d^2-2d+2)k}{d^2-3d+4}} |F_k|
=1.
\end{align*}
 Thus the estimate \eqref{Finalproof} holds for even dimensions $d\ge 6.$
\begin{remark} Recall that to deduce the inequality \eqref{eq4.10} we used the estimate \eqref{eq2.14} of Lemma \ref{Lem2.10} which was proved only for even dimension $d\ge 6.$ However, if $d=4$, we can not apply the estimate \eqref{eq2.14} of Lemma \ref{Lem2.10} and so we need to take a different approach to prove the estimate \eqref{Finalproof} for $d=4.$
\end{remark}

\subsection{Proof of the estimate \eqref{Finalproof} for $d=4$}
We aim to show that
\begin{equation}\label{eq4.11}\mbox{B}:=q^{\frac{44}{5}} \sum_{k=0}^\infty \sum_{j=k}^\infty 2^{-k-j} \|(F_k\ast \widehat{M})(F_j\ast \widehat{M})\|_{L^{\frac{5}{4}}(\mathbb F_q^4, dx)} \lesssim 1,\end{equation}
where the following conditions hold:
\begin{equation}\label{eq4.12} \sum_{k=0}^\infty 2^{-\frac{5k}{4}} |F_k|=1 \end{equation}
and
\begin{equation}\label{eq4.13} |F_k|\le 2^{\frac{5k}{4}}\quad \mbox{for all}~~j=0,1,\ldots.
\end{equation}
By  H\"older's inequality, we have
$$\mbox{B}\le q^{\frac{44}{5}} \sum_{k=0}^\infty \sum_{j=k}^\infty 2^{-k-j} \|(F_k\ast \widehat{M})\|_{L^{\frac{10}{3}}(\mathbb F_q^4, dx)}  \|(F_j\ast \widehat{M})\|_{L^2(\mathbb F_q^4, dx)}.$$
Since  $F_k \subset C\subset \mathbb F_q^4$ for $k=0,1,\ldots,$ and $|C|\sim q^3,$ we have
\begin{align*}
\mbox{B}\le \,& q^{\frac{44}{5}} \sum_{\substack{k=0\\
:1\le |F_k|\le q}}^\infty \sum_{j=k}^\infty 2^{-k-j} \|(F_k\ast \widehat{M})\|_{L^{\frac{10}{3}}(\mathbb F_q^4, dx)}  \|(F_j\ast \widehat{M})\|_{L^2(\mathbb F_q^4, dx)}\\
&+ q^{\frac{44}{5}} \sum_{\substack{k=0\\
: q\le |F_k|\le q^2}}^\infty \sum_{j=k}^\infty 2^{-k-j} \|(F_k\ast \widehat{M})\|_{L^{\frac{10}{3}}(\mathbb F_q^4, dx)}  \|(F_j\ast \widehat{M})\|_{L^2(\mathbb F_q^4, dx)}\\
&+q^{\frac{44}{5}} \sum_{\substack{k=0\\
:q^2\le |F_k|\lesssim q^3}}^\infty \sum_{j=k}^\infty 2^{-k-j} \|(F_k\ast \widehat{M})\|_{L^{\frac{10}{3}}(\mathbb F_q^4, dx)}  \|(F_j\ast \widehat{M})\|_{L^2(\mathbb F_q^4, dx)}.
\end{align*}
Using the upper bound of $\|F_k\ast \widehat{M}\|_{L^{\frac{10}{3}}(\mathbb F_q^4, dx)}$ in \eqref{eq2.15} of Lemma \ref{Lem2.10},  we see that
\begin{align*}
\mbox{B}\le \,& q^{\frac{9}{2}} \sum_{\substack{k=0\\
:1\le |F_k|\le q}}^\infty \sum_{j=k}^\infty 2^{-k-j} |F_k|^{\frac{7}{10}}\,\|(F_j\ast \widehat{M})\|_{L^2(\mathbb F_q^4, dx)}\\
&+ q^{\frac{21}{5}} \sum_{\substack{k=0\\
: q\le |F_k|\le q^2}}^\infty \sum_{j=k}^\infty 2^{-k-j} |F_k| \, \|(F_j\ast \widehat{M})\|_{L^2(\mathbb F_q^4, dx)}\\
&+q^{\frac{24}{5}} \sum_{\substack{k=0\\
:q^2\le |F_k|\lesssim q^3}}^\infty \sum_{j=k}^\infty 2^{-k-j} |F_k|^{\frac{7}{10}} \, \|(F_j\ast \widehat{M})\|_{L^2(\mathbb F_q^4, dx)}\\
&:=B_1+B_2+B_3.\end{align*}
To prove \eqref{eq4.11}, it will be enough to show that
$$ B_1\lesssim 1, \quad B_2\lesssim 1, \quad B_3\lesssim 1.$$
Now recall from \eqref{eq2.16} and \eqref{L2fine} that the estimates
\begin{equation}\label{eq4.14}
\|F\ast \widehat{M}\|_{L^2(\mathbb F_q^4, dx)}
\lesssim q^{-\frac{9}{2}} |F|^{\frac{3}{4}}
\end{equation}
and
\begin{equation}\label{eq4.15} \|F\ast \widehat{M}\|_{L^2(\mathbb F_q^4, dx)} \lesssim
\left\{ \begin{array}{ll} q^{-4} |F|^{\frac{1}{2}}\quad\mbox{if}~~ q^{2} \le |F| \lesssim q^{3}\\
   q^{-5} |F| \quad\mbox{if}~~ q \le |F| \le q^{2}\\
   q^{-\frac{9}{2}} |F|^{\frac{1}{2}} \quad\mbox{if}~~  1\le |F| \le q \end{array}\right. \end{equation}
hold for all $F\subset (C, \sigma) \subset \mathbb F_q^4.$
In order to estimate $B_1$, we use the estimates \eqref{eq4.14}, \eqref{eq4.13}. Then it follows that
\begin{align*} B_1 &\lesssim \sum_{\substack{k=0\\
:1\le |F_k|\le q}}^\infty \sum_{j=k}^\infty 2^{-k-j} |F_k|^{\frac{7}{10}} |F_j|^{\frac{3}{4}} \le \sum_{k=0}^\infty \sum_{j=0}^\infty 2^{-k-j} |F_k|^{\frac{7}{10}} |F_j|^{\frac{3}{4}}\\
&\le \left(\sum_{k=0}^\infty 2^{-\frac{k}{8}}\right) \left(\sum_{j=0}^\infty 2^{-\frac{j}{16}}\right) \sim 1.\end{align*}
Hence, $B_1 \lesssim 1.$ Next, to estimate $B_2,$ we write
\begin{align*}
B_2=& q^{\frac{21}{5}} \sum_{\substack{k=0\\
: q\le |F_k|\le q^2}}^\infty \sum_{\substack{j=k\\
:1\le |F_j|<q^2}}^\infty 2^{-k-j} |F_k| \, \|(F_j\ast \widehat{M})\|_{L^2(\mathbb F_q^4, dx)} \\
&+ q^{\frac{21}{5}} \sum_{\substack{k=0\\
: q\le |F_k|\le q^2}}^\infty \sum_{\substack{j=k\\
: q^2\le |F_j|\lesssim q^3}}^\infty 2^{-k-j} |F_k| \, \|(F_j\ast \widehat{M})\|_{L^2(\mathbb F_q^4, dx)}\\
:=& B_{2,1} +B_{2,2}.
\end{align*}
To estimate $B_{2,1},$ we use \eqref{eq4.14}. Then we see that
$$B_{2,1} \lesssim  q^{-\frac{3}{10}} \sum_{k=0}^\infty \sum_{\substack{j=k\\
:1\le |F_j|<q^2}}^\infty 2^{-k-j} |F_k| \, |F_j|^{\frac{3}{4}}.$$
Observe that if $|F_j|<q^2,$ then $q^{-\frac{3}{10}}|F_j|^{\frac{3}{4}} <|F_j|^{\frac{3}{5}}.$ From this observation, \eqref{eq4.13}, and \eqref{eq4.12}, it follows that
\begin{align*} B_{2,1} &\lesssim  \sum_{k=0}^\infty \sum_{j=k}^\infty
2^{-k-j} |F_k| \, |F_j|^{\frac{3}{5}}\le \sum_{k=0}^\infty 2^{-k}|F_k| \left( \sum_{j=k}^\infty 2^{-\frac{j}{4}}\right)\\
&\sim \sum_{k=0}^\infty 2^{-k}|F_k|  2^{-\frac{k}{4}}=\sum_{k=0}^\infty
2^{-\frac{5k}{4}}|F_k| =1. \end{align*}
In order to estimate $B_{2,2},$ notice from \eqref{eq4.15} that
$$\|(F_j\ast \widehat{M})\|_{L^2(\mathbb F_q^4, dx)} \lesssim q^{-4} |F_j|^{\frac{1}{2}}\quad\mbox{if}~~ q^{2} \le |F_j| \lesssim q^{3}.$$
Using this, we see that
$$B_{2,2} \lesssim q^{\frac{1}{5}}\sum_{k=0}^\infty \sum_{\substack{j=k\\
: q^2\le |F_j|\lesssim q^3}}^\infty 2^{-k-j} |F_k| \, |F_j|^{\frac{1}{2}}.$$
Since $q^{\frac{1}{5}}|F_j|^{\frac{1}{2}} \le |F_j|^{\frac{3}{5}}$ for $|F_j|\ge q^2,$ it follows that
$$B_{2,2} \lesssim \sum_{k=0}^\infty \sum_{j=k}^\infty 2^{-k-j} |F_k| \,
|F_j|^{\frac{3}{5}}.$$
Applying \eqref{eq4.13} and \eqref{eq4.12}, we have
$$B_{2,2} \lesssim \sum_{k=0}^\infty  2^{-k} |F_k| \left(\sum_{j=k}^\infty
2^{-\frac{j}{4}}\right) \sim \sum_{k=0}^\infty  2^{-k} |F_k|\,2^{-\frac{k}{4}}= \sum_{k=0}^\infty
2^{-\frac{5k}{4}}|F_k| =1.$$
Thus, we have proved that $B_2\lesssim 1.$
Finally, we shall prove $B_3\lesssim 1.$
To estimate $B_3$, we split $B_3$ into two terms:
\begin{align*} B_3=& q^{\frac{24}{5}} \sum_{\substack{k=0\\
: q^2\le |F_k|\lesssim q^3}}^\infty \sum_{\substack{j=k\\
:1\le |F_j|<q^2}}^\infty 2^{-k-j} |F_k|^{\frac{7}{10}} \, \|(F_j\ast \widehat{M})\|_{L^2(\mathbb F_q^4, dx)} \\
&+q^{\frac{24}{5}} \sum_{\substack{k=0\\
: q^2\le |F_k|\lesssim q^3}}^\infty \sum_{\substack{j=k\\
:q^2\le |F_j|\lesssim q^3}}^\infty 2^{-k-j} |F_k|^{\frac{7}{10}} \, \|(F_j\ast \widehat{M})\|_{L^2(\mathbb F_q^4, dx)} \\
:=& B_{3,1} +B_{3,2}.
\end{align*}
It follows from \eqref{eq4.14} that
$$ B_{3,1} \lesssim q^{\frac{3}{10}} \sum_{\substack{k=0\\
: q^2\le |F_k|\lesssim q^3}}^\infty \sum_{\substack{j=k\\
:1\le |F_j|<q^2}}^\infty 2^{-k-j} |F_k|^{\frac{7}{10}} \, |F_j|^{\frac{3}{4}}. $$
Since $q^{\frac{3}{10}}|F_k|^{\frac{7}{10}} \le q^{-\frac{3}{10}}|F_k|$ for $q^2\le |F_k|$, we have
$$B_{3,1} \lesssim  \sum_{\substack{k=0\\
: q^2\le |F_k|\lesssim q^3}}^\infty \sum_{\substack{j=k\\
:1\le |F_j|<q^2}}^\infty 2^{-k-j} |F_k| \, q^{-\frac{3}{10}} |F_j|^{\frac{3}{4}}. $$
Using the fact that $q^{-\frac{3}{10}} |F_j|^{\frac{3}{4}} < |F_j|^{\frac{3}{5}}$ for $|F_j|<q^2,$ we obtain that
$$B_{3,1} \lesssim  \sum_{\substack{k=0\\
: q^2\le |F_k|\lesssim q^3}}^\infty \sum_{\substack{j=k\\
:1\le |F_j|<q^2}}^\infty 2^{-k-j} |F_k| \, |F_j|^{\frac{3}{5}}
\le \sum_{k=0}^\infty \sum_{j=k}^\infty 2^{-k-j} |F_k| \, |F_j|^{\frac{3}{5}}.$$
By \eqref{eq4.13} and \eqref{eq4.12}, we see that
\begin{align*} B_{3,1} &\lesssim  \sum_{k=0}^\infty 2^{-k}|F_k| \left( \sum_{j=k}^\infty 2^{-\frac{j}{4}}\right)\\
&\sim \sum_{k=0}^\infty 2^{-k}|F_k|  2^{-\frac{k}{4}}=\sum_{k=0}^\infty
2^{-\frac{5k}{4}}|F_k| =1. \end{align*}
In order to estimate $B_{3,2},$ we begin by recalling from \eqref{eq4.15} that
$$\|(F_j\ast \widehat{M})\|_{L^2(\mathbb F_q^4, dx)} \lesssim q^{-4} |F_j|^{\frac{1}{2}}\quad\mbox{if}~~ q^{2} \le |F_j| \lesssim q^{3}.$$
From this estimate and the definition of $B_{3,2},$ it follows that
$$ B_{3,2} \lesssim q^{\frac{4}{5}} \sum_{\substack{k=0\\
: q^2\le |F_k|\lesssim q^3}}^\infty \sum_{\substack{j=k\\
:q^2\le |F_j|\lesssim q^3}}^\infty 2^{-k-j} |F_k|^{\frac{7}{10}} \, |F_j|^{\frac{1}{2}}.$$
Since $q^{\frac{4}{5}} |F_k|^{\frac{7}{10}} \le q^{\frac{1}{5}} |F_k|$ for $|F_k|\ge q^2,$ it follows that
$$B_{3,2} \lesssim  \sum_{\substack{k=0\\
: q^2\le |F_k|\lesssim q^3}}^\infty \sum_{\substack{j=k\\
:q^2\le |F_j|\lesssim q^3}}^\infty 2^{-k-j} |F_k|\, q^{\frac{1}{5}}|F_j|^{\frac{1}{2}}.$$
We apply a fact that $q^{\frac{1}{5}}|F_j|^{\frac{1}{2}} \le |F_j|^{\frac{3}{5}}$ for $|F_j|\ge q^2,$ and conclude by \eqref{eq4.13} and \eqref{eq4.12} that

\begin{align*}
B_{3,2} &\lesssim \sum_{\substack{k=0\\
: q^2\le |F_k|\lesssim q^3}}^\infty \sum_{\substack{j=k\\
:q^2\le |F_j|\lesssim q^3}}^\infty 2^{-k-j} |F_k|\, |F_j|^{\frac{3}{5}}
\le \sum_{k=0}^\infty \sum_{j=k}^\infty 2^{-k-j} |F_k|\, |F_j|^{\frac{3}{5}}
\\
&\le \sum_{k=0}^\infty 2^{-k}|F_k| \left( \sum_{j=k}^\infty 2^{-\frac{j}{4}}\right)
\sim \sum_{k=0}^\infty 2^{-k}|F_k|  2^{-\frac{k}{4}}=\sum_{k=0}^\infty
2^{-\frac{5k}{4}}|F_k| =1.
\end{align*}
We have proved that $B_3\lesssim 1.$
Putting all estimates together, we complete the proof of the estimate \eqref{Finalproof} for $d=4.$
\end{proof}


\begin{thebibliography}{7}

\bibitem{Ba85} B.~ Barcelo, \emph{On the restriction of the Fourier transform to a conical surface,} Trans. Amer. Math. Soc. {\bf 292} (1985), 321-333.
\bibitem{B91} J.~Bourgain, \emph{Besicovitch-type maximal operators and applications to Fourier analysis,} Geom. and Funct. Anal. {\bf 22} (1991), 147-187.
\bibitem{Bo91} J.~ Bourgain, \emph{On the restriction and multiplier problem in $\mathbb R^3$,} Lecture notes in Mathematics, no. 1469, Springer Verlag, 1991.
\bibitem{Ca06} A.~Carbery, \emph{Harmonic analysis on vector spaces over finite fields}, Lecture note,
http://www.maths.ed.ac.uk/~carbery/analysis/notes/fflpublic.pdf.
\bibitem{CSW} A.~Carbery, B.~Stones, and J.~Wright, \emph{Averages in vector spaces over finite fields,} Math. Proc. Camb. Phil. Soc. {\bf144} (2008), no. 13, 13-27.
\bibitem{EOT}  J. S. Ellenberg,~R. Oberlin, and T.~ Tao, \emph{The Kakeya set and maximal conjectures for algebraic varieties over finite fields},  Mathematika, {\bf 56}(2009), no. 1,  1-25.
\bibitem{Fe70} C.~Fefferman, \emph{Inequalities for strongly singular convolution  operators}, Acta Math. {\bf 124} (1970), 9-36.
\bibitem{GGIPS13} D. Geba, A. Greenleaf, A. Iosevich, E. Palsson, E. Sawyer, \emph{Restricted convolution inequalities, multilinear operators and applications,} Math. Res. Lett. {\bf 20} (2013), no. 4, 675-694.
\bibitem{Gu15} L. Guth, \emph{A restriction estimate using polynomial partitioning},  J. Amer. Math. Soc. \textbf{29}  (2016),  no. 2, 371-413. 
\bibitem{Gu16} L. Guth, \emph{Restriction estimates using polynomial partitioning II}, preprint (2016), arXiv:1603.04250.
\bibitem{IK10} A. Iosevich and D. Koh, \emph{Extension theorems for spheres in the finite field setting}, Forum. Math. {\bf 22} (2010), no. 3, 457-483.
\bibitem{Ko} D. Koh, \emph{Averaging operators over nondegenerate quadratic surfaces in finite fields}, Forum Math. {\bf 27} (2015), 1227-1247.
\bibitem{Ko15} D. Koh, \emph{Sharp $L^p- L^r$ estimates of restricted averaging operators over curves on planes in finite fields,} Journal of the Chungcheong Math. Soc. {\bf 28} (2015), no. 2, 251-259.
\bibitem{KS11} D. Koh and C. Shen, \emph{Extension and averaging operators for finite fields,} Proc. Edinb. Math. Soc. {\bf 56} (2013), no. 2, 599-614.
\bibitem{KY15} D. Koh and S.Yeom, \emph{Restriction of averaging operators to algebraic varieties over finite fields,} Taiwanese J. Math. {\bf 21} (2017), no. 1, 211-229.
\bibitem{KSS} D. Koh, S. Shen, and I. Shparlinski  \emph{Averaging operators over homogeneous varieties over  finite fields,} J. Geom. Anal. {\bf 26} (2016), no. 2, 1415-1441.

\bibitem{Le13} M.~Lewko, \emph{New restriction estimates for the 3-d paraboloid over finite fields},   Adv. Math. {\bf 270} (2015), no. 1, 457-479.
\bibitem{Le14}  M.~Lewko, \emph{Finite field restriction estimates based on Kakeya maximal operator estimates},  arXiv:1401.8011.
\bibitem{LL10} A.~ Lewko and M.~Lewko, \emph{Endpoint restriction estimates for the paraboloid over finite fields}, Proc. Amer. Math. Soc. {\bf 140} (2012), 2013-2028.
\bibitem{MT04} G. Mockenhaupt, and T. Tao, \emph{Restriction and Kakeya phenomena for finite fields}, Duke Math. J. {\bf 121} (2004), no. 1, 35-74.
\bibitem{Ta03}T. Tao, \emph{A sharp bilinear restriction estimate for paraboloids}, Geom. Funct. Anal.  {\bf 13} (2003), 1359-1384.
\bibitem{Ta04} T. Tao, \emph{Some recent progress on the restriction conjecture}, Fourier analysis and convexity,  217-243, Appl. Numer. Harmon. Anal., Birkh\"auser Boston, Boston, MA, (2004).
\bibitem{To75} P. Tomas, \emph {A restriction theorem for the Fourier transform,} Bull. Amer. Math. Soc. {\bf 81} (1975), 477-478.
\bibitem{Vi12} L.A. Vinh, \emph{Maximal sets of pairwise orthogonal vectors in finite fields}, Canad.Math.Bull. {\bf 55} (2012), no. 2, 418-423.
\bibitem{Wo01} T. Wolff, \emph{A sharp bilinear cone restriction estimate,}  Ann. of Math. {\bf 153} (2001), 661-698.
\bibitem{Wo03} T.~ Wolff, \emph{Thomas Wolff's Lectures on Harmonic Analysis edited by Izabella Laba and Carol Shubin}, American Mathematical Society, University Lecture Series, vol. 29,  2003.
\bibitem{Zy74} A. Zygmund, \emph{On Fourier coefficients and transforms of functions  of two variables}, Studia Math. {\bf 50} (1974), 189-201.

\end{thebibliography}
\end{document}